\documentclass{amsart}
\usepackage{amssymb,amsmath,amsthm,enumerate,amsbsy}
\usepackage[mathscr]{eucal}
\usepackage{framed,color,graphicx}
\usepackage{mathrsfs}
\usepackage{cite}
\usepackage[all]{xy}
\usepackage{tikz}
\usetikzlibrary{positioning,decorations.pathreplacing,patterns,decorations.pathmorphing}
\tikzset{%
element/.style={draw, shape=circle, fill=white, inner sep=1.4pt}
}

\usepackage[pdftex,
            pdfauthor={Zidong Gao},
            pdftitle={A finitely based finite semiring generates a variety with continuum many subvarieties},
            pdfsubject={Mathematics},
            pdfkeywords={semiring, variety, finite basis problem},
            bookmarksnumbered=true,
            pdfstartview={FitH},
            colorlinks=true,
            citecolor=blue,
            linkcolor=red,
            urlcolor=cyan]{hyperref}

\usepackage[utf8]{inputenc}
\usepackage[T1]{fontenc}

\DeclareSymbolFont{bbold}{U}{bbold}{m}{n}
\DeclareSymbolFontAlphabet{\mathbbold}{bbold}

\theoremstyle{plain}
\newtheorem{theorem}{Theorem}[section]
\newtheorem{lemma}[theorem]{Lemma}
\newtheorem{corollary}[theorem]{Corollary}
\newtheorem{proposition}[theorem]{Proposition}

\newtheorem{problem}[theorem]{Problem}

\theoremstyle{definition}

\newtheorem{question}[theorem]{Question}
\newtheorem{remark}[theorem]{Remark}

\newcommand{\occ}{\operatorname{occ}}

\renewcommand{\ge}{\geqslant}
\renewcommand{\le}{\leqslant}

\newcommand{\Mod}{\mathsf{Mod}}

\newcommand{\bp}{\mathbf{p}}
\newcommand{\bq}{\mathbf{q}}

\newcommand{\bu}{\mathbf{u}}
\newcommand{\bv}{\mathbf{v}}
\newcommand{\bw}{\mathbf{w}}

\begin{document}

\title
[A finitely based finite semiring generates a variety with continuum many subvarieties]
{A finitely based finite semiring generates a variety with continuum many subvarieties}

\author{Zidong Gao}
\address{School of Mathematics, Northwest University, Xi'an, 710127, Shaanxi, P.R. China}
\email{zidonggao@yeah.net}
%
%
%
%
\subjclass[2010]{16Y60, 03C05, 08B15}
\keywords{semiring, variety, finite basis problem}
\thanks{Zidong Gao is supported by National Natural Science Foundation of China (12371024, 12571020).}

\begin{abstract}
This paper establishes the existence of a finitely based finite semiring whose variety contains a continuum of subvarieties; such a variety is said to be of type \(2^{\aleph_0}\). Using the homomorphism theory of Kneser graphs, we prove that the 3-element semiring \(S_{53}\) is the first known example with this property. Moreover, \(S_{53}\) belongs to the variety of the max-plus semiring \((\mathbb{N},\max,+)\), which therefore is also of type \(2^{\aleph_0}\). For the finitely based 4-element semiring \(B_0\), we demonstrate that its variety contains infinitely many subvarieties and suggest that \(B_0\) could be another potential example of type \(2^{\aleph_0}\).
\end{abstract}

\maketitle

\section{Introduction}\label{sec:intro}

A \emph{variety} is a class of algebras closed under taking subalgebras, homomorphic images, and arbitrary direct products.
For a class $\mathcal{K}$ of algebras,
we write $\mathsf{V}(\mathcal{K})$ for the variety generated by $\mathcal{K}$,
that is, the smallest variety containing $\mathcal{K}$.
In particular, $\mathsf{V}(A)$ denotes the variety generated by a single algebra $A$.

By Birkhoff's theorem, a class of algebras is a variety if and only if it is an \emph{equational class},
that is, it consists of all algebras satisfying a certain set of identities.
For any class $\mathcal{K}$ of algebras, denote by $\mathsf{Id}(\mathcal{K})$ the set of all identities over a given countably infinite set $X$ of variables that are satisfied by $\mathcal{K}$.
A variety is \emph{finitely based} if it can be defined by a finite set of identities;
otherwise, it is \emph{nonfinitely based}.
Correspondingly, an algebra $A$ is called finitely based (nonfinitely based) if the variety $\mathsf{V}(A)$ generated by $A$ is finitely based (nonfinitely based).
The finite basis problem for a class of algebras,
one of the most important problems in universal algebra,
concerns the classification of its members according to whether they are finitely based.

For a variety \(\mathcal{V}\), the collection of all its subvarieties forms a complete lattice with respect to class inclusion, denoted by \(\mathcal{L}(\mathcal{V})\) and called the \emph{subvariety lattice} of \(\mathcal{V}\). If \(\mathcal{W}\) is a subvariety of \(\mathcal{V}\), then the interval \([\mathcal{W},\mathcal{V}]\) --- consisting of all varieties between \(\mathcal{W}\) and \(\mathcal{V}\) --- is a complete sublattice of \(\mathcal{L}(\mathcal{V})\).
Understanding the structure of the subvariety lattice $\mathcal{L}(\mathcal{V})$ is a fundamental problem in the study of varieties.
A central aspect of this problem is determining the size of intervals within the lattice.
Such descriptions reveal not only the internal complexity of $\mathcal{V}$,
but also often reflect important algebraic and logical properties of the algebras it contains.

A \emph{semiring} is an algebraic structure $(S, +, \cdot)$ equipped with two binary operations $+$ and $\cdot$ such that:
\begin{itemize}
\item the additive reduct $(S, +)$ is a commutative semigroup;
\item the multiplicative reduct $(S, \cdot)$ is a semigroup;
\item the distributive laws hold:
\[
(x+y)z \approx xz + yz, \qquad x(y+z) \approx xy + xz.
\]
\end{itemize}
An  \emph{additively idempotent semiring} (ai-semiring for short)
is a semiring whose additive reduct is idempotent.
This class of algebras has received particular attention in the study of varieties and equational properties.  Such algebras are ubiquitous in mathematics and find applications in diverse
areas as seen in fields such as algebraic geometry~\cite{cc}, tropical geometry~\cite{ms}, information science~\cite{gl}, and theoretical computer science~\cite{go}.


Let $S$ be an ai-semiring. Then the relation $\leq$ on $S$ defined by
\[
a \leq b \Leftrightarrow a+b=b
\]
is a partial order such that $(S, \leq)$ forms an upper semilattice, where the supremum of any two elements $a$ and $b$ is $a+b$.
Consequently, the additive reduct $(S, +)$ is uniquely determined by this semilattice order.
It is therefore often convenient to visualize the addition via the Hasse diagram of $(S, \leq)$.
Moreover, the order $\leq$ is readily seen to be compatible with multiplication,
which explains why such an algebra is also termed a \emph{semilattice-ordered semigroup}.

Over the past two decades, the finite basis problem and other variety-theoretic properties
for ai-semirings have been intensively studied and well developed
(see~\cite{dol07, dgv, dol09, gjrz, gv2302, gv2301,gv2501, gpz, jrz, pas05, rlyc, rlzc, rjzl, rzv, rz16, rzw, sr, vol21, yrzs, zrc}).
In particular,
Pastijn et al.~\cite{gpz, pas05} established that there are precisely $78$ ai-semiring
varieties satisfying the identity $x^2\approx x$, all of which are finitely based.
Ren et al.~\cite{rz16, rzw} later proved that there are precisely $179$ ai-semiring
varieties satisfying the identity $x^3\approx x$, which are likewise all finitely based.
Recently, Volkov et al.~\cite{rzv} showed that for any integer $n\geq 4$,
there are $2^{\aleph_0}$ distinct varieties satisfying $x^n \approx x$,
most of which are nonfinitely based.

\begin{table}[ht]
\centering
\caption{The Cayley tables of $S_7$} \label{tbs7}
\begin{tabular}{c|ccc}
$+$      &$0$&$a$&$1$\\
\hline
$0$      &$0$&$0$&$0$\\
$a$      &$0$&$a$&$0$\\
$1$      &$0$&$0$&$1$\\
\end{tabular}\qquad\qquad
\begin{tabular}{c|ccc}
$\cdot$  &$0$&$a$&$1$\\
\hline
$0$      &$0$&$0$&$0$\\
$a$      &$0$&$0$&$a$\\
$1$      &$0$&$a$&$1$\\
\end{tabular}
\end{table}

Shao and Ren~\cite{sr} established that every ai-semiring in the variety generated by
all ai-semirings of order two is finitely based.
Zhao et al.~\cite{zrc} showed that there are, up to isomorphism, $61$
ai-semirings of order three, which are denoted by $S_i$, $1 \leq i \leq 61$.
All of these are finitely based, with the possible exception of the semiring $S_7$; see Table~\ref{tbs7} for its Cayley tables.
Jackson et al.~\cite{jrz} later established that $S_7$ itself is nonfinitely based.
In fact, they showed that $S_7$ can transmit this property to many other finite ai-semirings.
Such examples include finite flat semirings whose varieties contain $S_7$,
and the ai-semiring $B^1_2$ whose multiplicative reduct is the six-element Brandt monoid
\[
\begin{array}{cccccc}
\begin{pmatrix}
0 & 0 \\
0 & 0
\end{pmatrix} &
\begin{pmatrix}
1 & 0 \\
0 & 1
\end{pmatrix} &
\begin{pmatrix}
0 & 1 \\
0 & 0
\end{pmatrix} &
\begin{pmatrix}
0 & 0 \\
1 & 0
\end{pmatrix} &
\begin{pmatrix}
1 & 0 \\
0 & 0
\end{pmatrix} &
\begin{pmatrix}
0 & 0 \\
0 & 1
\end{pmatrix} \\
0 & 1 & e_{12} & e_{21} & e_{11} & e_{22}
\end{array}
\]
with multiplication given by ordinary matrix multiplication.
Its additive reduct is determined by the Hasse diagram in Figure~\ref{fig1}.
Perkins~\cite{perkins1969} showed that the multiplicative reduct of $B^1_2$ is nonfinitely based;
indeed, it was the first example of a nonfinitely based finite semigroup,
and is sometimes referred to as Perkins' semigroup.

One can readily verify that $B^1_2$ contains a copy of $S_7$, whence $\mathsf{V}(S_7)$ is a subvariety of $\mathsf{V}(B^1_2)$. Recently, Gao et al. further showed that the interval $[\mathsf{V}(S_7), \mathsf{V}(B_2^1)]$ contains continuum many subvarieties, each of which is nonfinitely based.
\begin{figure}[h]\label{b21}
\centering
\scalebox{0.8}{%
\begin{tikzpicture}[
    node distance=1.5cm and 1.5cm,
    solidnode/.style={circle, draw=black, fill=black, inner sep=2pt, minimum size=4pt},
    hollownode/.style={circle, draw=black, fill=white, inner sep=2pt, minimum size=4pt}
]

\node[solidnode, label=below:1] (one) at (0,0.5) {};

\node[hollownode] (e12) at (-2,2) {};
\node[solidnode] (e11) at (-0.8,2) {};
\node[solidnode] (e22) at (0.8,2) {};
\node[hollownode] (e21) at (2,2) {};

\node[below=2pt of e12] {$e_{12}$};
\node[below left=2pt and -8pt of e11] {$e_{11}$}; 
\node[below right=2pt and -8pt of e22] {$e_{22}$}; 
\node[below=2pt of e21] {$e_{21}$};

\node[solidnode, label=above:0] (zero) at (0,3.5) {};

\draw (one) -- (e11);
\draw (one) -- (e22);

\draw (e12) -- (zero);
\draw (e11) -- (zero);
\draw (e22) -- (zero);
\draw (e21) -- (zero);

\end{tikzpicture}%
}
\caption{The additive order of $B_2^1$}
\label{fig1}
\end{figure}
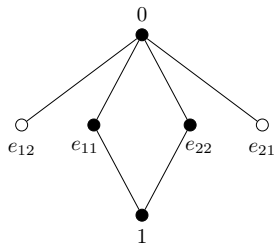

On the other hand, several intervals and subvariety lattices of ai-semiring varieties have been shown to have the cardinality of the continuum.
In particular, Dolinka~\cite{dol08} proved that the interval $[\mathsf{V}(\Sigma_7), \mathsf{V}(\mathcal{R}_2)]$ also has cardinality $2^{\aleph_0}$.
Here, $\Sigma_7$ denotes the $7$-element ai-semiring obtained from $B_2^1$ by adjoining a new element that acts as both the additive least element and the multiplicative zero, while $\mathcal{R}_2$ is the semiring of all binary relations on a $2$-element set.

To the best of our knowledge, $\mathcal{R}_2$ is the first example of a finite ai-semiring whose variety contains $2^{\aleph_0}$ distinct subvarieties. It is also the first known example of an \emph{inherently nonfinitely based} semiring \cite{dol09}, meaning that every locally finite variety containing $\mathcal{R}_2$ is nonfinitely based. For context, $\Sigma_7$ itself was the first example of a nonfinitely based finite ai-semiring \cite{dol07}. We also note that Volkov~\cite{vol21} independently resolved the finite basis problem for $B^1_2$ and $\Sigma_7$ using a different method.
More recently, Shaprynski\u{\i} \cite{shap23} showed that the subsemiring $\{0,e_{11},e_{12},e_{22}\}$ of $B_2^1$, denoted $B_0$, is finitely based. Nevertheless, the finite basis problem for the flat subsemiring $B_2$ of $B_2^1$ remains open.

Recently, Gao et al.~\cite{gjrz} proved that the semiring $S_7$ is of type $2^{\aleph_0}$.
More precisely, \cite[Corollary 4.13]{gjrz} shows that for each $k \geq 3$, the interval $[\mathsf{V}(S_c(a_1\cdots a_k)), \mathbf{N}_{k+1}]$ in the subvariety lattice $\mathcal{L}(\mathsf{V}(S_7))$ has the cardinality of the continuum, where $\mathbf{N}_{k+1}$ denotes the subvariety of $\mathsf{V}(S_7)$ determined by the $(k+1)$-nilpotent identity.
Consequently, the variety $\mathbf{NF}_{k+1}$ generated by all $(k+1)$-nilpotent flat semirings is also of type $2^{\aleph_0}$.

The cases $\mathbf{NF}_1$ and $\mathbf{NF}_2$ are straightforward: $\mathbf{NF}_1$ is the trivial semiring variety $\mathbf{T}$, and $\mathbf{NF}_2$ coincides with $\mathsf{V}(S(a))$, so their subvariety lattices have cardinalities $1$ and $2$, respectively.
For $\mathbf{NF}_3$, Gao and Ren~\cite{gr} characterized its subdirectly irreducible members, which are precisely the graph semirings. Using cycle graph semirings, they further proved that $\mathbf{NF}_3$ is likewise of type $2^{\aleph_0}$.

Combined with recent results from \cite{gjrz2,gry}, it is now known that the subvariety lattice of $\mathsf{V}(\mathcal{R}_2)$ contains a chain of consecutive intervals
\[
[\mathbf{T},\mathsf{V}(S_7)],\
[\mathsf{V}(S_7),\mathsf{V}(B_2^1)],\
[\mathsf{V}(B_2^1),\mathsf{V}(\Sigma_7)],\
\text{and}\
[\mathsf{V}(\Sigma_7),\mathsf{V}(\mathcal{R}_2)],
\]
each of which has cardinality $2^{\aleph_0}$.

Several other known examples of varieties of type $2^{\aleph_0}$ exist in various algebraic contexts~\cite{dol08, jac:uncoutably, jaclee, gla, gus, rjzl, tra}. Notably, Jackson~\cite{jac:uncoutably} provided the first example of a finite, finitely based semigroup that generates such a variety.
For semirings, however, no such example had been found until now. This naturally leads to the question:

\begin{question}
Does there exist a finitely based finite semiring whose variety is of type $2^{\aleph_0}$?
\end{question}

In the present paper, we provide the first example of such a semiring.
Since every variety generated by a single $2$-element semiring contains only finitely many subvarieties (see~\cite{ww}),
we must therefore begin our search among $3$-element semirings.
By investigating the subvariety lattice of the variety generated by each finitely based $3$-element ai-semiring,
we discovered that most of these varieties turn out to have only finitely many subvarieties.

A notable exception is the variety generated by the $3$-element ai-semiring $S_{53}$, whose Cayley tables are given in Table~\ref{tb2}.

\begin{table}[ht]
\centering
\caption{Cayley tables of $S_{53}$} \label{tb2}
\begin{tabular}{c|ccc}
$+$ & $0$ & $a$ & $1$\\
\hline
$0$ & $0$ & $0$ & $0$\\
$a$ & $0$ & $a$ & $a$\\
$1$ & $0$ & $a$ & $1$\\
\end{tabular}
\qquad\qquad
\begin{tabular}{c|ccc}
$\cdot$ & $0$ & $a$ & $1$\\
\hline
$0$ & $0$ & $0$ & $0$\\
$a$ & $0$ & $0$ & $a$\\
$1$ & $0$ & $a$ & $1$\\
\end{tabular}
\end{table}

From Table~\ref{tb2} we see that $S_{53}$ shares the same multiplicative reduct with $S_7$, but its additive order is a chain. In fact, the additive order coincides with the divisibility relation of the multiplicative semigroup.
We shall prove that the variety generated by $S_{53}$ is of type $2^{\aleph_0}$.

The structure of this paper is as follows. In Section~\ref{sec:prelim}, we introduce the necessary notions and notation, particularly ideal quotients, flat semirings of the form \(S(W)\), and isoterms.
In Section~\ref{sec:blockhypergraph}, we present basic concepts related to hypergraphs and the ai-semiring terms induced by hypergraphs, and provide a sufficient condition for a variety to be of type \(2^{\aleph_0}\).
In Section~\ref{sec:division}, we show that the semigroup \(S_c(W)\) forms an ai-semiring under the partial order induced by the divisibility relation, denoted \(S_c^*(W)\).
Finally, using \(S_c^*(a_1\cdots a_n)\), we prove that the varieties \(\mathsf{V}(S_{53})\) and the max-plus semiring \((\mathbb N,\max,+)\) are of type \(2^{\aleph_0}\).
In Section~\ref{sec:b0}, we prove that the variety generated by the semiring \(B_0\) contains infinitely many subvarieties \(\{\mathsf{V}(S(a_1\cdots a_n))\}_{n\geq 1}\), which also suggests it may be of type \(2^{\aleph_0}\).

\section{Preliminaries}\label{sec:prelim}
This section presents the required preliminary material and notational conventions.
For further details on concepts such as free algebras and subalgebras, we refer the reader to~\cite{bs,jrz}.
We begin by introducing the following notation.
Let $P$ denote the set of all prime numbers. For any positive integer $n$, let $[n]$ denote the set $\{1,\dots,n\}$.
For any nonnegative integer $k$, let $\mathbb{N}_{\ge k}$ denote the set of integers greater than or equal to $k$. In particular, $\mathbb{N} = \mathbb{N}_{\ge 0}$.
Next, we will focus on introducing the concepts of flat semirings and ideal quotients.

A \emph{flat semiring} is an ai-semiring~\(S\) such that its multiplicative reduct has a zero~\(0\) and its addition satisfies \(a+b = 0\) for all distinct \(a, b \in S\).
As observed in~\cite[Lemma 2.2]{jrz} (see also~\cite[Lemma 4.1.1]{elk}), a semigroup with zero \(0\) becomes a flat semiring precisely when it is \(0\)-cancellative, that is, \(ab=ac\neq 0\) implies \(b=c\) and \(ab=cb\neq 0\) implies \(a=c\) for all \(a, b, c\in S\).

For any cancellative semigroup $(S,\cdot)$, let $S^0$ denote the semigroup obtained by adjoining a multiplicative zero $0$. Then $S^0$ is $0$-cancellative and, endowed with the corresponding flat addition, it becomes a flat semiring; we call it the \emph{flat extension} of $S$ and denote it by $\flat(S)$. We shall see that ideal quotients of the flat extension $\flat(X^+)$ of $X^+$ provide a rich source of flat semirings.

Suppose that \(S\) is an ai-semiring containing a proper nonempty subset \(J\) that is both a multiplicative absorbing ideal and an order-theoretic filter with respect to \(+\). Then we can form the \emph{ideal quotient} \(S/J\) by identifying all elements of \(J\). For example, the semiring \(S_7\) is obtained as the ideal quotient of \(\flat(\{a\}^*)\) with \(J = \{a^k \mid k > 1\}\).

Now let \(W\) be a set of words in \(X^+\). Denote by \(W^{\leq}\) the set of all (nonempty) subwords of words in \(W\). Define a semigroup on the set \(W^{\leq} \cup \{0\}\) by letting \(0\) be a multiplicative zero and, for \(\bu, \bv \in W^{\leq}\),
\[
\bu \cdot \bv =
\begin{cases}
\bu\bv, & \text{if } \bu\bv \in W^{\leq}, \\
0, & \text{otherwise}.
\end{cases}
\]
This semigroup is denoted by \(S(W)\). When the empty word \(1\) is allowed, the multiplicative reduct is a monoid, which we denote by \(M(W)\).
Working in the free commutative semigroup \(X_c^+\) (or monoid \(X_c^*\)) yields the commutative analogues \(S_c(W)\) and \(M_c(W)\).
When \(W\) is a singleton set \(\{\mathbf{w}\}\), we write \(\mathbf{w}\) in place of \(\{\mathbf{w}\}\).

All of the above constructions yield \(0\)-cancellative semigroups. By equipping them with the corresponding flat addition we obtain flat semirings, which we denote by the same symbols. They can all be realized as ideal quotients of flat extension semigroups. For example, the flat semiring \(S(W)\) is isomorphic to the ideal quotient of \(\flat(X^+)\) with respect to the set \(J := \{\bw \in X^+ \mid \bw \notin W^{\leq}\}\).

In Section~\ref{sec:division} we shall introduce another semilattice order on \(S_c(W)\) and \(M_c(W)\), turning them into ai-semirings. The resulting structures will be denoted by \(S_c^*(W)\) and \(M_c^*(W)\), respectively.

Let $X^+$ denote the free semigroup over a countably infinite set $X$ of variables.
By distributivity, all ai-semiring terms over $X$ can be expressed as a finite sum of words from $X^+$.
An \emph{ai-semiring identity} over $X$ is an
expression of the form
\[
\mathbf{u}\approx \mathbf{v},
\]
where $\mathbf{u}$ and $\mathbf{v}$ are ai-semiring terms over $X$.
Let $\mathbf{u}\approx \mathbf{v}$ be an ai-semiring identity over an alphabet $\{x_1, x_2, \ldots, x_n\}$.
We say that an ai-semiring \emph{$S$ satisfies $\mathbf{u}\approx \mathbf{v}$} or \emph{$\mathbf{u}\approx \mathbf{v}$ holds in $S$},
if $\mathbf{u}(a_1, a_2, \ldots, a_n)=\mathbf{v}(a_1, a_2, \ldots, a_n)$ for all $a_1, a_2, \ldots, a_n\in S$,
where $\mathbf{u}(a_1, a_2, \ldots, a_n)$ denotes the result of evaluating $\mathbf{u}$ in $S$
under the assignment $x_i\rightarrow a_i$, and similarly for $\mathbf{v}(a_1, a_2, \ldots, a_n)$.

Let $\mathbf{p}$ be a word in $X^+$ and $x\in X$ be a variable. Then $\occ(x, \mathbf{p})$ denotes the number of occurrences of $x$ in $\mathbf{p}$.
Also let $c(\mathbf{p})$ denote the set of all variables that occur in $\mathbf{p}$.
For an ai-semiring term $\mathbf{u}=\mathbf{u}_1+\cdots+\mathbf{u}_n$ with each $\mathbf{u}_i \in X^+$,
we shall use $c(\mathbf{u})$ to denote the set of all variables that occur in $\mathbf{u}$, that is,
\[
c(\mathbf{u})=\bigcup_{1\leq i \leq n}c(\mathbf{u}_i).
\]
Recall from \cite[Lemma 1.1]{sr} that an identity $\mathbf{u} \approx \mathbf{v}$ holds in $M_2$ (the flat semiring $M(1)$) exactly when $c(\mathbf{u}) = c(\mathbf{v})$; such identities are called \emph{regular}.

For any ai-semiring terms $\mathbf{u}$ and $\mathbf{v}$,
let $\mathbf{u} \preceq \mathbf{v}$ denote the identity $\mathbf{u} + \mathbf{v} \approx \mathbf{v}$.
Let $S$ be an ai-semiring.
We write $\mathbf{u} \preceq_{S} \mathbf{v}$ if $S$ satisfies the identity $\mathbf{u} \preceq \mathbf{v}$.
A word $\mathbf{w}$ is \emph{minimal} for $S$ if
$\mathbf{u} \preceq_{S} \mathbf{w}$ implies $\mathbf{u} = \mathbf{w}$ for every word $\mathbf{u}$.
Equivalently,
$\mathbf{w}$ is minimal for $S$ if and only if it is an \emph{isoterm} for $S$;
that is, $S$ satisfies no nontrivial identity of the form $\mathbf{w} \approx \mathbf{u}$ for any ai-semiring term $\mathbf{u}$.
If $\mathbf{w}$ is an isoterm for $S$,
then every word obtained from a subword of $\mathbf{w}$ by renaming its letters
is also an isoterm for $S$. The following lemma reveals a good connection between isoterms and flat semirings.

\begin{lemma}[{\cite[Lemma 5.6]{rjzl}}]\label{isosw}
Let $\mathbf{w}$ be a word and let $S$ be an ai-semiring.
Then $\mathbf{w}$ is an isoterm for $S$
if and only if the flat semiring $S(\mathbf{w})$ belongs to $\mathsf{V}(S)$.
\end{lemma}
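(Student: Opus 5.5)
We prove both directions of Lemma~\ref{isosw}, working with the sum-of-words normal form of ai-semiring terms.

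\emph{($\Leftarrow$)} Suppose $S(\bw)\in\mathsf{V}(S)$ and $S$ satisfies $\bw\approx\bu$ with $\bu=\bu_1+\cdots+\bu_m$, each $\bu_j\in X^+$. We show $\bu=\bw$. The identity also holds in $S(\bw)$. Evaluate it at the assignment sending the $i$-th letter occurrence of $\bw$ to itself. Concretely, rename $\bw$ so that its letters are the generators of $S(\bw)$, i.e.\ send each variable $x$ to the single-letter subword $x\in\bw^{\le}$. Then $\bw$ evaluates to the nonzero element $\bw$. Since $\bw$ is a subword of itself, every prefix product is nonzero.

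Hence $\bu$ also evaluates to $\bw\neq0$. In a flat semiring a sum is nonzero only if all summands are equal and nonzero. So every $\bu_j$ evaluates to the element $\bw$.

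We now argue that a word evaluating to $\bw$ under this assignment must literally equal $\bw$. The evaluation of a word $\bu_j$ is the concatenation of the images of its letters, provided that concatenation lies in $\bw^{\le}$, and $0$ otherwise. Because letters are sent to single letters, the product is nonzero exactly when the concatenation $\bu_j$ itself, read as a word in the alphabet of $\bw$, is a subword of $\bw$. In that case its value is the word $\bu_j$ itself. So the value equals $\bw$ only if $\bu_j=\bw$ as words.

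Thus $\bu=\bw+\cdots+\bw=\bw$ by additive idempotence, and $\bw$ is an isoterm.

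\emph{($\Rightarrow$)} Suppose $\bw$ is an isoterm for $S$. Since every finitely generated algebra in a variety is a quotient of the free algebra, it suffices to show that every identity of $S$ holds in $S(\bw)$. Then $S(\bw)\in\mathsf{V}(S)$ by Birkhoff's theorem.

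Take $\bu\approx\bv$ holding in $S$ and an assignment $\varphi$ into $S(\bw)$. If both sides evaluate to $0$ we are done. So suppose $\varphi(\bu)=\bp\neq0$. Then every word $\bu_j$ of $\bu$ evaluates to the same nonzero subword $\bp$ of $\bw$. Each variable is sent to a nonzero subword (or to $0$ if it does not occur in a surviving word).

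Form the substitution $\theta$ that replaces each variable $x$ with $\varphi(x)\neq 0$ by the word $\varphi(x)$, viewed as a word over fresh letters in $\bp$'s alphabet. Then $\theta(\bu_j)=\bp$ as words for every $j$, so $\theta(\bu)=\bp$ as ai-semiring terms. Since $S$ satisfies $\bu\approx\bv$, it also satisfies $\bp\approx\theta(\bv)$.

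Since $\bp$ is a subword of $\bw$, it is an isoterm for $S$ by the remark preceding the lemma. Hence $\theta(\bv)=\bp$ syntactically.

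Two points must then be checked. First, every variable of $\bv$ must receive a nonzero value. If some variable of $\bv$ were sent to $0$, then $\theta$ is undefined on it; to handle this, send such a variable to a fresh letter $z$. The resulting term $\theta(\bv)$ then contains $z$ while $\bp$ does not, contradicting $\theta(\bv)=\bp$. Second, each word $\bv_k$ must satisfy $\theta(\bv_k)=\bp$, so that $\varphi(\bv_k)=\bp$ in $S(\bw)$. This gives $\varphi(\bv)=\bp=\varphi(\bu)$. By symmetry the case $\varphi(\bv)\neq 0$ is the same, so the identity holds in $S(\bw)$.

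The main obstacle is this bookkeeping in the ($\Rightarrow$) direction. One must ensure that the zero assignments and the letter-renaming substitution faithfully transfer the nonzero evaluation in $S(\bw)$ into a syntactic identity of the form $\bp\approx\theta(\bv)$, to which isoterm-ness applies. The fresh-letter trick for variables sent to $0$ handles the first of these concerns.
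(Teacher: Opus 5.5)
The paper does not prove this lemma; it quotes it from \cite[Lemma 5.6]{rjzl}. So there is no in-text argument to compare with, and your proposal has to stand as a self-contained proof. It does: the argument is correct, and it is the natural semiring analogue of the standard semigroup isoterm criterion for $S(W)$.

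For ($\Leftarrow$) you evaluate at the generic assignment $x\mapsto x$ in $S(\bw)$. There a nonzero value of a word is literally its concatenation, so every summand of $\bu$ must equal $\bw$. For ($\Rightarrow$) you turn a nonzero evaluation $\varphi(\bu)=\bp$ into the syntactic identity $\bp\approx\theta(\bv)$, where $\bp$ is a factor of $\bw$. You then use that factors of isoterms are isoterms, and conclude with Birkhoff's theorem. That factor-closure fact is only asserted in the paper, but it is a one-liner: if $\bw=\mathbf{a}\bp\mathbf{b}$ and $S$ satisfies $\bp\approx\mathbf{t}$, then $S$ satisfies $\bw\approx\mathbf{a}\mathbf{t}\mathbf{b}$, and cancellation in $X^*$ forces $\mathbf{t}=\bp$. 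Your fresh-letter device for variables of $\bv$ sent to $0$ is exactly what rules out right-hand sides containing extra variables.

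A few points need tidying:
\begin{itemize}
\item In ($\Leftarrow$) your assignment is defined only on $c(\bw)$. Send every variable of $c(\bu)\setminus c(\bw)$ to $0$. Any $\bu_j$ containing such a variable then evaluates to $0$, which would force $\varphi(\bu)=0\neq\bw$, so your conclusion $\bu_j=\bw$ for all $j$ stands.
\item In ($\Rightarrow$), drop the phrase ``viewed as a word over fresh letters.'' The map $\theta(x)=\varphi(x)\in X^+$ is an ordinary substitution.
\item Also in ($\Rightarrow$), the passage from $\theta(\bv_k)=\bp$ as words to $\varphi(\bv_k)=\bp$ in $S(\bw)$ uses that $\bw^{\le}$ is closed under factors. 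This guarantees no partial product collapses to $0$; say so explicitly.
\item The remark about finitely generated algebras being quotients of free algebras is superfluous. You only use $\mathsf{V}(S)=\Mod(\mathsf{Id}(S))$.
\end{itemize}

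An equally standard alternative for ($\Rightarrow$), closer in spirit to the paper's ideal-quotient viewpoint, realizes $S(\bw)$ as the ideal quotient of the relatively free algebra of $\mathsf{V}(S)$ on $c(\bw)$ by the elements not represented by factors of $\bw$. Your identity-checking route avoids verifying that this set is an ideal and an order filter, at the cost of invoking Birkhoff.
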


\section{Hypergraph and hypergraph terms}\label{sec:blockhypergraph}
This section introduces hypergraphs and their induced ai-semiring terms, focusing on Kneser hypergraphs.
Using their homomorphism theory, we establish a sufficient condition for an ai-semiring variety to be of type $2^{\aleph_0}$.

A \emph{hypergraph} $\mathbb{H}$ is a pair $(V, E)$,
where $E$ is a family of nonempty subsets of a set $V$.
Each element of $V$ is a \emph{vertex} of $\mathbb{H}$,
and each element of $E$ is a \emph{hyperedge} of $\mathbb{H}$.
A subset of $V$ is a \emph{subhyperedge} of $\mathbb{H}$ if it is contained in some hyperedge.
For an integer $k \geq 1$, the hypergraph $\mathbb{H}$ is \emph{$k$-uniform} if $|e| = k$ for every $e \in E$.

A \emph{homomorphism} from a hypergraph $\mathbb{H}_1=(V(\mathbb{H}_1), E(\mathbb{H}_1))$ to a hypergraph
$\mathbb{H}_2=(V(\mathbb{H}_2), E(\mathbb{H}_2))$ is a mapping
$\varphi \colon V(\mathbb{H}_1) \to V(\mathbb{H}_2)$ such that for every hyperedge $e \in E(\mathbb{H}_1)$,
its image $\varphi(e)$ is a hyperedge in $E(\mathbb{H}_2)$.
We denote by ${\rm Hom}(\mathbb{H}_1, \mathbb{H}_2)$ the set of all homomorphisms from $\mathbb{H}_1$ to $\mathbb{H}_2$.

We now recall some ai-semiring terms associated with $k$-uniform hypergraphs,
which were introduced in \cite{aj, gjrz2, jrz, gjrz} and have found a series of successful applications.
Let $\mathbb H=(V, E)$ be a $k$-uniform hypergraph,
and let $\{x_v \mid v \in V\}$ be a set of variables in one-to-one correspondence with $V$.
We denote by ${\bf t}_{{\mathbb{H}}}$ the ai-semiring term
\[
\sum_{\{v_1,v_2,\ldots,v_k\} \in E} x_{v_1}x_{v_2}\cdots x_{v_k},
\]
and by $\mathbf{q}_{{\mathbb H}}$ the word
\[
\prod_{v \in V} x_v,
\]
where the product may be taken in any fixed order.
Note that each hyperedge of $\mathbb{H}$ gives rise to $k!$ distinct words in ${\bf t}_{{\mathbb{H}}}$.

Let $m$ and $k \geq 3$ be positive integers.
The \emph{$k$-uniform Kneser hypergraph} $\mathbb{H}_{k,m}=(V_{k,m}, E_{k,m})$
is the $k$-uniform hypergraph whose vertices are the $m$-subsets of the set $[km]$,
and whose hyperedges are the $k$ pairwise disjoint such subsets.
Consequently, each hyperedge corresponds to a partition of $[km]$ into $k$ parts of size $m$.

Let $\varphi$ be a mapping from $V_{k,m}$ to $V_{k,n}$. According to the definitions of hypergraph homomorphism and the Kneser hypergraph, $\varphi$ is a homomorphism from $\mathbb{H}_{k,m}$ to $\mathbb{H}_{k,n}$
if and only if $\varphi$ maps every set of $k$ pairwise disjoint $m$-subsets of $[km]$ to $k$ pairwise disjoint subsets in $[kn]$.
This condition is equivalent to requiring that $\varphi$ preserves disjointness, that is, it maps any two disjoint $m$-subsets of $[km]$ to disjoint $m$-subsets of $[kn]$.

Let $\mathbb{G}_{k, m}$ denote the \emph{Kneser graph}, whose vertices are the $m$-subsets of the set $[km]$,
with two $m$-subsets adjacent if they are disjoint.
The above argument shows that ${\rm Hom}(\mathbb{H}_{k,m},\mathbb{H}_{k,n}) = {\rm Hom}(\mathbb{G}_{k,m},\mathbb{G}_{k,n})$.
This result can also be found in \cite[Theorem 1]{bbdvv}.
On the other hand, \cite[Lemma 7.9.3]{cg} states that
${\rm Hom}(\mathbb{G}_{k,m},\mathbb{G}_{k,n})$ is nonempty if and only if $m$ divides $n$.
We therefore obtain:

\begin{lemma}\label{homgh}
Let $m, n$ and $k$ be integers with $k \geq 3$. Then
${\rm Hom}(\mathbb{H}_{k,m},\mathbb{H}_{k,n})$ is nonempty if and only if $m$ divides $n$.
\end{lemma}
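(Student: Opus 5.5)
The lemma follows once we combine the two facts recorded just before it. The first is the equality ${\rm Hom}(\mathbb{H}_{k,m},\mathbb{H}_{k,n})={\rm Hom}(\mathbb{G}_{k,m},\mathbb{G}_{k,n})$, which the paper derives and which also appears as \cite[Theorem 1]{bbdvv}. The second is \cite[Lemma 7.9.3]{cg}, which says that for $k\ge 3$ there is a homomorphism $\mathbb{G}_{k,m}\to\mathbb{G}_{k,n}$ exactly when $m\mid n$. So the proof has two steps. We first confirm the equality of homomorphism sets carefully, and then we either invoke or reprove the Kneser graph result.

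For the first step, let $\varphi\colon V_{k,m}\to V_{k,n}$ be a map. Suppose $\varphi$ is a hypergraph homomorphism, and let $A,B$ be disjoint $m$-subsets of $[km]$. Then $A\cup B$ leaves $(k-2)m$ points uncovered, and these can be partitioned into $k-2$ further $m$-sets. Together with $A$ and $B$ this gives a hyperedge containing $A$ and $B$. Its image is a hyperedge, so it consists of $k$ distinct pairwise disjoint sets, and in particular $\varphi(A)\cap\varphi(B)=\emptyset$. Conversely, suppose $\varphi$ preserves disjointness. Then a hyperedge $\{A_1,\dots,A_k\}$ is sent to $k$ pairwise disjoint $n$-subsets of $[kn]$. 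These are automatically distinct, since they are nonempty and disjoint, so the image is a partition of $[kn]$ and hence a hyperedge. This gives the equality of the two homomorphism sets.

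For the second step, the sufficiency direction is constructive. If $n=dm$, fix a partition of $[kn]$ into $km$ blocks $B_1,\dots,B_{km}$, each of size $d$. Send an $m$-set $A$ to $\bigcup_{i\in A}B_i$. This set has size $dm=n$, and the map clearly preserves disjointness.

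The necessity direction, that a homomorphism forces $m\mid n$, is the main obstacle. Here I would rely on \cite[Lemma 7.9.3]{cg}. If I had to sketch its mechanism, I would use fractional chromatic number and independence ratio arguments for Kneser graphs $K(km,m)$. Their fractional chromatic number is $km/m=k$, so both graphs have the same value and the usual monotonicity gives no obstruction. The finer argument in \cite{cg} instead uses Erd\H{o}s--Ko--Rado: one pulls back maximum independent sets, which are stars, and shows that the preimage of a star in $\mathbb{G}_{k,n}$ under a homomorphism must be a union of stars. Counting then forces $m\mid n$, and the hypothesis $k\ge3$ is what makes EKR apply with stars as the unique extremal families. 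Since the paper takes this lemma as citable, I would simply invoke it and combine it with the first step to conclude.
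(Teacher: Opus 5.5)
Your proposal is correct and follows the paper's route: you reduce hypergraph homomorphisms to Kneser graph homomorphisms via the disjointness-preservation equivalence, which you verify in more detail than the paper (including the extension to a partition and the distinctness of images), and then invoke \cite[Lemma 7.9.3]{cg}. The only quibble is in your optional sketch of the cited lemma: it is precisely the equality of the fractional chromatic numbers that forces the preimage of a star to be a maximum independent set, hence by Erd\H{o}s--Ko--Rado a single star rather than a union of stars, but since you rely on the citation this does not affect the argument.
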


\begin{lemma}\label{0-max}
Let $S$ be an ai-semiring whose multiplicative reduct is $S_c(\mathbf{w})$ with $\ell(\mathbf{w})\geq 2$.
Then
\begin{enumerate}
    \item[$(\rm i)$] $0$ is the greatest element of $S$.
    \item[$(\rm ii)$] For subwords $\mathbf{u},\mathbf{v}$ of $\mathbf{w}$, if $\mathbf{u}\leq \mathbf{v}$ then $\mathbf{u}$ is a subword of $\mathbf{v}$.
    \item[$(\rm iii)$] For any subset $X\subseteq S$, each element of the subsemiring $\langle X\rangle$ generated by $X$ is either $0$ or a superword of some word in $X$.
\end{enumerate}
\end{lemma}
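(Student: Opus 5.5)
Here $S_c(\mathbf w)$ is the semigroup on the nonempty subwords of $\mathbf w$ in the free commutative semigroup, together with $0$, where a product is $0$ unless it is again a subword of $\mathbf w$. I will use only the semilattice axioms, distributivity, and the fact that the product of any two nonzero elements is either a subword or $0$. The plan is to prove (i) first, then derive (ii) and (iii) from it using compatibility of $\le$ with multiplication.

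For (i), fix a letter $x$ occurring in $\mathbf w$ and write $\mathbf w = x\mathbf w'$ with $\mathbf w'$ nonempty, which is possible since $\ell(\mathbf w)\ge 2$. The key step is to show that $s+0=0$ for every $s\in S$. Let $t=s+0$. Since $0$ is a multiplicative zero, distributivity gives $t\mathbf w = s\mathbf w + 0\mathbf w = s\mathbf w + 0$. Because $\mathbf w$ is maximal, $s\mathbf w = 0$ for every $s\neq 0$, and also $0\mathbf w = 0$, so $t\mathbf w = 0+0 = 0$. Hence $t$ is either $0$ or a word $\mathbf u$ with $\mathbf u\mathbf w=0$, and every word has this property, so this alone does not force $t=0$. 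I sharpen the argument by assuming $t=\mathbf u\neq 0$ and multiplying by a suitable element. Let $\mathbf v$ be the complement of $\mathbf u$ in $\mathbf w$, so that $\mathbf u\mathbf v=\mathbf w$ when $\mathbf u\ne\mathbf w$. Then $\mathbf w = t\mathbf v = s\mathbf v + 0$. If $\mathbf u=\mathbf w$, choose $\mathbf v$ to be a single letter; then $t\mathbf v = 0$, while $t\mathbf v = s\mathbf v+0$, so the equation is consistent and gives nothing. This case $\mathbf u=\mathbf w$, that is $\mathbf w+0=\mathbf w$, is the main obstacle.

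To handle the obstacle, I argue as follows. If $\mathbf w + 0 = \mathbf w$, then $\mathbf w$ is the top element and $0\le\mathbf w$. Multiplying by a proper subword $\mathbf p$ with $\mathbf p\cdot\mathbf q=\mathbf w$ gives $0 = 0\cdot\mathbf q \le \mathbf w\mathbf q$; I still need to rule this out. Instead, I use that the letter $x\le\mathbf w$, which holds because $\mathbf w$ is the top element. Multiplying by $\mathbf w'$ gives $\mathbf w = x\mathbf w'\le \mathbf w\mathbf w' = 0$, so $\mathbf w\le 0$, and together with $0\le\mathbf w$ this forces $\mathbf w=0$, a contradiction. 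The general case $\mathbf u=\mathbf w$ reduces to exactly this argument. For the case $\mathbf u\ne\mathbf w$, I use $\mathbf w=s\mathbf v+0$: this says $0\le\mathbf w$, so $\mathbf w+0=\mathbf w$, and we are back in the case just excluded. Hence $t=0$ and $0$ is the greatest element.

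For (ii), suppose $\mathbf u\le\mathbf v$ and $\mathbf u$ is not a subword of $\mathbf v$. Let $\mathbf r$ be the complement of $\mathbf v$ in $\mathbf w$, so $\mathbf v\mathbf r=\mathbf w$ and $\mathbf u\mathbf r\ne\mathbf w$. If $\mathbf v=\mathbf w$, take $\mathbf r$ empty and argue directly. Compatibility gives $\mathbf u\mathbf r\le\mathbf v\mathbf r=\mathbf w$. If $\mathbf u\mathbf r=0$, then $0\le\mathbf w$, which contradicts (i) because $\mathbf w\ne 0$. If $\mathbf u\mathbf r$ is a word, then it is a subword of $\mathbf w$ with the same length as $\mathbf w$ but not equal to $\mathbf w$, since the contents differ; this is impossible, so in fact $\mathbf u\mathbf r=0$, which is again a contradiction. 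In the case $\mathbf r$ is empty, $\mathbf v=\mathbf w$ and $\mathbf u\neq\mathbf w$. I then multiply by a letter $y$ with $\mathbf u y$ still a subword of $\mathbf w$, so that $\mathbf u y\le\mathbf w y=0$, and induct on $\ell(\mathbf w)-\ell(\mathbf u)$ using the previous step.

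For (iii), every element of $\langle X\rangle$ is a sum of products of elements of $X$. Each product is either $0$ or a superword of its first factor. A sum that involves two distinct nonzero words equals, by (i) and the order structure, an element lying above both. By (ii) that element is either $0$ or a word containing both of them as subwords, and in particular it is a superword of a word in $X$.
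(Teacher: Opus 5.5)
\textbf{There is a genuine gap in (i).} It is at the sentence ``If $\mathbf{w}+0=\mathbf{w}$, then $\mathbf{w}$ is the top element.'' That equation only says $0\le\mathbf{w}$. Nothing in it forces $x\le\mathbf{w}$, yet your contradiction $\mathbf{w}=x\mathbf{w}'\le\mathbf{w}\mathbf{w}'=0$ rests entirely on that inequality.

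No argument can supply this step for arbitrary $\mathbf{w}$ with $\ell(\mathbf{w})\ge 2$, because (i) is then false. On $S_c(a^2)=\{a,a^2,0\}$ take the chain $0<a^2<a$ with addition $\max$. Multiplication by $a$ sends $0,a^2,a$ to $0,0,a^2$, and multiplication by $a^2$ or by $0$ is constant. So multiplication is monotone, which on a chain is equivalent to distributivity. This gives an ai-semiring with multiplicative reduct $S_c(a^2)$ in which $0$ is the \emph{least} element, and $a^2\le a$ violates (ii) as well.

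The paper's proof has the same hole. Its step $\mathbf{w}_1(\mathbf{w}+\mathbf{w}_2)=\mathbf{w}_1\mathbf{w}$ tacitly uses $\mathbf{w}_2\le\mathbf{w}$, the counterpart of your $x\le\mathbf{w}$. In the example with $\mathbf{w}_1=\mathbf{w}_2=a$ one has $a^2+a=a\neq a^2$. So your route is essentially the paper's and breaks at the same point.

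\textbf{What is missing} is an extra hypothesis plus an idea that uses it. Assume $\mathbf{w}$ contains two distinct letters $x\neq y$, as $a_1\cdots a_{kp}$ does in the proof of Theorem~\ref{th1}. Your complement argument does correctly give $0\le\mathbf{w}$. Let $\mathbf{p}$ be $\mathbf{w}$ with one occurrence of $x$ deleted. It is nonempty because it contains $y$, and $\mathbf{p}y=0$ because it has too many $y$'s. Then $\mathbf{p}(x+y)=\mathbf{p}x+\mathbf{p}y=\mathbf{w}+0=\mathbf{w}$. Hence $x+y\neq 0$, and cancellation in $X_c^+$ forces $x+y=x$. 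Symmetrically $x+y=y$, a contradiction.

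\textbf{Given (i), your (ii) and (iii) are fine}, up to two repairs in (ii).
\begin{enumerate}
\item $\mathbf{u}\mathbf{r}$ need not have the length of $\mathbf{w}$. It equals $0$ for a different reason: a letter $z$ with $\occ(z,\mathbf{u})>\occ(z,\mathbf{v})$ has $\occ(z,\mathbf{u}\mathbf{r})>\occ(z,\mathbf{w})$.
\item The case of empty $\mathbf{r}$ is vacuous, since $\mathbf{v}=\mathbf{w}$ makes every subword of $\mathbf{w}$ a subword of $\mathbf{v}$. The induction you sketch there would only produce $\mathbf{u}y\le 0$, which contradicts nothing.
\end{enumerate}
Multiplying by the whole complement $\mathbf{r}$ instead of the paper's power $a^k$ of an offending letter is a harmless variation.
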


\begin{proof}
We first prove $(\rm i)$. Assume, for the sake of contradiction, that $0$ is not the greatest element of $S$.
Then there exists a subword $\mathbf{w}_1$ of $\mathbf{w}$ such that $0\leq \mathbf{w}_1$.
Write $\mathbf{w}=\mathbf{w}_1\mathbf{w}_2$ for some $\bw_2\leq \bw$.
Since multiplication is order-preserving, we obtain
\[
0=0\cdot \mathbf{w}_2\leq \mathbf{w}_1\cdot \mathbf{w}_2=\mathbf{w}.
\]
On the other hand, using the distributive law we compute
\[
0 = \mathbf{w}_1\mathbf{w}
    = \mathbf{w}_1(\mathbf{w}+\mathbf{w}_2)
    = \mathbf{w}_1\mathbf{w} + \mathbf{w}_1\mathbf{w}_2
    = 0 + \mathbf{w}
    = \mathbf{w},
\]
which contradicts the assumption $\mathbf{w} \neq 0$. Hence $0$ must indeed be the greatest element of $S$, establishing $(\rm i)$.

To prove $(\rm ii)$, let $\mathbf{u}$ and $\mathbf{v}$ be two subwords of $\mathbf{w}$ with $\mathbf{u}\leq \mathbf{v}$.
We claim that $\mathbf{u}$ is a subword of $\mathbf{v}$.
Suppose the contrary; then there exists a letter $a$ such that $\occ(a,\mathbf{u}) > \occ(a,\mathbf{v})$.
Choose a positive integer $k$ such that in $S(\bw)$ we have $a^k\mathbf{u}=0$ but $a^k\mathbf{v}\neq 0$.
Because multiplication preserves order again, it follows that
\[
0 = a^k \cdot \mathbf{u} \leq a^k \cdot \mathbf{v} \neq 0,
\]
which contradicts $(\rm i)$ that $0$ is the greatest element of $S$.
Therefore $\mathbf{u}$ must be a subword of $\mathbf{v}$, proving $(\rm ii)$.

Finally, we prove $(\rm iii)$. Let $T$ denote the multiplicative subsemigroup generated by $X$.
Each element of $T$ is a product of finitely many elements from $X$, hence is either $0$ or a superword of some word in $X$.
Now the subsemiring $\langle X\rangle$ consists of finite sums (i.e., suprema) of elements from $T$.
By $(\rm i)$ and $(\rm ii)$, the partial order in $S$ coincides with the subword relation among comparable nonzero elements.
Therefore, any finite suprema of elements that are superwords of words from $X$ are themselves superwords of some word from $X$ (or equal $0$).
Consequently, every element of $\langle X\rangle$ is either $0$ or a superword of some word in $X$, which establishes $(\rm iii)$.
\end{proof}

\begin{remark}
The case $\ell(\mathbf{w}) = 1$ is indeed exceptional. According to \cite{sr}, the semigroup $S(a)$ admits two distinct additive orders that make it an ai-semiring, namely $N_2$ and $T_2$.
\end{remark}

According to the above lemma, divisibility is closely related to making the semigroup $S(\mathbf{w})$ into an ai-semiring by defining a semilattice order. We will discuss this further in Section~\ref{sec:division}.
Next, we present the main result of this paper. For convenience, let $P$ denote the set of all prime numbers.

\begin{theorem}\label{th1}
Let $\mathcal{V}$ be an ai-semiring variety.
Suppose that for each positive integer $n$,
there exists an ai-semiring $S_n$ in $\mathcal{V}$ whose multiplicative reduct is $S_c(a_1 \cdots a_n)$.
Then $\mathcal{V}$ is of type $2^{\aleph_0}$.
\end{theorem}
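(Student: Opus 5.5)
The plan is to produce, for every set $Q$ of primes, a subvariety $\mathcal{V}_Q\subseteq\mathcal{V}$ so that $Q\mapsto\mathcal{V}_Q$ is injective. Since $P$ is countably infinite, this gives $2^{\aleph_0}$ distinct subvarieties. The distinguishing tool will be Lemma~\ref{homgh} with $k=3$: for primes $p\neq q$ there is no homomorphism $\mathbb{H}_{3,p}\to\mathbb{H}_{3,q}$, while the identity map is a homomorphism $\mathbb{H}_{3,p}\to\mathbb{H}_{3,p}$. I would put $\mathcal{V}_Q=\mathsf{V}(\{S_{3q}\mid q\in Q\})\subseteq\mathcal{V}$, where $S_{3q}$ is the given ai-semiring with multiplicative reduct $S_c(a_1\cdots a_{3q})$. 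For each prime $p$ I will look for an identity $\sigma_p$ that holds in $S_{3q}$ for every prime $q\neq p$ and fails in $S_{3p}$. Then $S_{3p}\in\mathcal{V}_Q$ iff $p\in Q$, so the $\mathcal{V}_Q$ are pairwise distinct.

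\emph{Encoding.} A vertex $U$ of $\mathbb{H}_{3,n}$ is an $n$-subset of $[3n]$; I encode it as the element $a_U=\prod_{i\in U}a_i$ of $S_c(a_1\cdots a_{3n})$. A product of three such elements equals the top word $\mathbf{w}=a_1\cdots a_{3n}$ exactly when the three subsets form a partition of $[3n]$, that is, a hyperedge; otherwise the product is $0$, since some letter repeats. So an assignment $x_U\mapsto a_{\varphi(U)}$ with $\varphi\colon V_{3,m}\to V_{3,n}$ sends every monomial of $\mathbf{t}_{\mathbb{H}_{3,m}}$ to $\mathbf{w}$ exactly when $\varphi\in{\rm Hom}(\mathbb{H}_{3,m},\mathbb{H}_{3,n})$. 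In that case $\mathbf{t}_{\mathbb{H}_{3,m}}$ evaluates to $\mathbf{w}$. By Lemma~\ref{0-max}(i) the element $0$ is the top, so any single zero monomial makes the whole sum $0$.

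\emph{Choosing $\sigma_p$.} I would take $\sigma_p$ to be $\mathbf{t}_{\mathbb{H}_{3,p}}\approx\mathbf{t}_{\mathbb{H}_{3,p}}+\mathbf{s}_p$. Here $\mathbf{s}_p$ is a term that evaluates to $0$ under any nonzero assignment realizing a homomorphism, but is absorbed otherwise. My first candidate is the sum of the products $x_ux_vx_w$ over triples of vertices that are not hyperedges but pairwise meet in a controlled way. Then the identity assignment in $S_{3p}$ should give left side $\mathbf{w}$ and right side $0$, so $\sigma_p$ fails in $S_{3p}$.

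\emph{Satisfaction in $S_{3q}$ (main obstacle).} Conversely, I must show that $\sigma_p$ holds in $S_{3q}$ for $q\neq p$. Take any assignment with $\mathbf{t}_{\mathbb{H}_{3,p}}\neq0$. By Lemma~\ref{0-max}(ii),(iii) the values are words whose sums are governed by the subword order. Because all monomials are nonzero, the letter contents of the images of disjoint $p$-sets must be disjoint, and each hyperedge product must lie below $\mathbf{w}$. From the images of the variables I would then extract a disjointness-preserving map $V_{3,p}\to V_{3,q'}$ for a suitable $q'$ dividing $q$, which Lemma~\ref{homgh} forbids unless it degenerates; in the degenerate case the extra term $\mathbf{s}_p$ should be absorbed. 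This extraction is the hard part: an arbitrary assignment need not send vertices to letter-sets of uniform size. I would first reduce to assignments whose values are words, using distributivity and the fact that $0$ is top. I would then try to show that the contents of the images of a $p$-set partition force a uniform size $q/p'$ with $p'\mid q$. If the first choice of $\mathbf{s}_p$ does not make this go through, I would adjust $\mathbf{s}_p$ (for instance by multiplying by a new variable) until exactly this correspondence holds.
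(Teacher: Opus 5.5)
There is a genuine gap, and it sits exactly where you placed the main obstacle. The claim that $\sigma_p$ holds in $S_{3q}$ for every prime $q\neq p$ is false for \emph{every} choice of $\mathbf{s}_p$, because the full semirings $S_{3q}$ are the wrong separating family.

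Take the case of interest, $S_n=S_c^*(a_1\cdots a_n)$ with the divisibility order. For $p<q$, the subwords of $a_1\cdots a_{3p}$ together with $0$ form a subsemiring of $S_c^*(a_1\cdots a_{3q})$: products of letter-disjoint subwords and unions of letter sets stay inside. This subsemiring is a copy of $S_c^*(a_1\cdots a_{3p})$. Hence $S_{3p}\in\mathsf{V}(S_{3q})$, and any identity failing in $S_{3p}$ also fails in $S_{3q}$. Concretely, your assignment $x_U\mapsto a_U$ with $U\subseteq[3p]\subseteq[3q]$ gives literally the same values of both sides in $S_{3q}$. Consequently $\mathcal{V}_Q=\mathsf{V}(S_{3\max Q})$ for finite nonempty $Q$, all infinite $Q$ give one and the same variety, and $Q\mapsto\mathcal{V}_Q$ has countable image.

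The primes $q<p$ cause trouble too. The Kneser graph $\mathbb{G}_{3,p}$ has chromatic number $p+2$. So whenever $p+2\le 3q$, a proper colouring $c$ with colours in $[3q]$ gives the assignment $x_U\mapsto a_{c(U)}$. Under it every monomial of $\mathbf{t}_{\mathbb{H}_{3,p}}$ is a product of three distinct letters, hence nonzero, yet no hypergraph homomorphism can be extracted. This is exactly the non-uniform-size phenomenon you anticipated, and it cannot be removed while you work inside all of $S_{3q}$.

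The missing idea is to shrink the algebras rather than tune the identity. Fix $k\ge 3$ and let $A_{k,p}$ be the subsemiring of $S_{kp}$ generated by the words $a_U$, where $U$ ranges over the $p$-subsets of $[kp]$. By Lemma~\ref{0-max}(iii), every nonzero element of $A_{k,p}$ is a superword of some $a_U$, so it has length at least $p$. Hence $(A_{k,p})^k=\{0,a_1\cdots a_{kp}\}$ and $(A_{k,p})^{k+1}=\{0\}$.

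This forces the uniformity you could not get. Suppose $\mathbf{t}_{\mathbb{H}_{k,q}}$ is nonzero under some assignment into $A_{k,p}$. Since $0$ is the top element, every hyperedge monomial is nonzero and so equals $a_1\cdots a_{kp}$. Then every variable goes to a $p$-subset word, and disjoint vertices go to disjoint words. This is a homomorphism $\mathbb{H}_{k,q}\to\mathbb{H}_{k,p}$, impossible for $q\neq p$ by Lemma~\ref{homgh}.

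Nilpotency also removes the need to search for $\mathbf{s}_p$. The paper uses $\sigma_{k,q}\colon\ \mathbf{t}_{\mathbb{H}_{k,q}}\approx\mathbf{t}_{\mathbb{H}_{k,q}}+\mathbf{q}_{\mathbb{H}_{k,q}}$. The product of all $\binom{kq}{q}>k$ variables is $0$ in every $A_{k,p}$, so $A_{k,p}\models\sigma_{k,q}$ if and only if $p\neq q$. With $\{A_{k,p}\}_p$ in place of $\{S_{3q}\}_q$, your injectivity argument for $Q\mapsto\mathsf{V}(\{A_{k,p}\mid p\in Q\})$ goes through.
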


\begin{proof}
For each integer $k \ge 3$, we will show that  $\mathcal{NV}_{k+1}$  is of type $2^{\aleph_0}$, where $\mathcal{NV}_{k+1}$ is subvariety of $\mathcal{V}$ determined by the $(k+1)$-nilpotent identity.
For each prime number $p$, let $A_{k,p}$ denote the subsemiring of $S_{kp}$ generated by the set
\begin{equation}\label{generator}
        \{\,a_{i_1} \cdots a_{i_p} \mid \{i_1,\dots,i_p\} \text{ is a $p$-subset of } [kp] \,\}.
\end{equation}
By Lemma~\ref{0-max}, the zero element $0$ is the greatest element of $A_{k,p}$ and every nonzero element of $A_{k,p}$ is a subword of $a_1 \cdots a_{kp}$ whose length is at least $p$. Consequently,
\begin{equation}\label{akp-power}
        (A_{k,p})^k = \{0,\; a_1 \cdots a_{kp}\}, \qquad (A_{k,p})^{k+1} = \{0\}.
\end{equation}
In particular, $A_{k,p}$ is $(k+1)$-nilpotent, so $A_{k,p} \in \mathcal{NV}_{k+1}$.

Next, we shall use the family $\{A_{k,p}\}_{p\in P}$ to prove that $\mathcal{NV}_{k+1}$ contains $2^{\aleph_0}$ distinct subvarieties.
For each prime $q$, consider the Kneser hypergraph $\mathbb{H}_{k,q} = (V_{k,q},\, E_{k,q})$ and the associated identity $\sigma_{k,q}$ defined by
\begin{equation}\label{eq-sigma-kq}
    \mathbf{t}_{\mathbb{H}_{k,q}} \approx \mathbf{t}_{\mathbb{H}_{k,q}} + \mathbf{q}_{\mathbb{H}_{k,q}}.
\end{equation}
We will show that the identities $\sigma_{k,q}$ can distinguish the algebras $A_{k,p}$ from one another.

First we prove that $A_{k,q}$ does not satisfy the identity \eqref{eq-sigma-kq}.
Define an assignment $\varphi : \{x_v \mid v \in V_{k,q}\} \to A_{k,q}$ by
\[
        \varphi(x_v) = \prod_{i \in v} a_i .
\]
Notice that every $v \in V_{k,q}$ corresponds to a $q$-subset of $[kq]$; hence $\varphi(x_v)$ belongs to the generating set \eqref{generator} of $A_{k,q}$.
Consequently, for each hyperedge $\{v_1,\dots,v_k\} \in E_{k,q}$ we have $\varphi(x_{v_1}\cdots x_{v_k}) = a_1 \cdots a_{kq}$, and thus
\[
        \varphi\bigl(\mathbf{t}_{\mathbb{H}_{k,q}}\bigr)
        = \sum_{\{v_1,\dots,v_k\} \in E_{k,q}} \varphi(x_{v_1}\cdots x_{v_k})
        = a_1 \cdots a_{kq}.
\]
On the other hand, the word $\mathbf{q}_{\mathbb{H}_{k,q}}$ has length $\binom{kq}{q}$, which is greater than $k$; hence $\varphi(\mathbf{q}_{\mathbb{H}_{k,q}}) = 0$ by \eqref{akp-power}.
Since $0$ is the greatest element of $A_{k,q}$, we obtain
\[
        \varphi\bigl(\mathbf{t}_{\mathbb{H}_{k,q}}\bigr) = a_1 \cdots a_{kq}\neq 0=
        \varphi\bigl(\mathbf{t}_{\mathbb{H}_{k,q}}\bigr) + \varphi\bigl(\mathbf{q}_{\mathbb{H}_{k,q}}\bigr).
\]
Therefore $A_{k,q}$ does not satisfy $\sigma_{k,q}$.

Next we prove that if $p \neq q$, then $A_{k,p}$ does satisfy the identity \eqref{eq-sigma-kq}.
Consider an arbitrary assignment $\varphi : \{x_v \mid v \in V_{k,q}\} \to A_{k,p}$.
We shall show that $\varphi(\mathbf{t}_{\mathbb{H}_{k,q}}) = 0$, from which the identity follows trivially.

Suppose, for contradiction, that $\varphi(\mathbf{t}_{\mathbb{H}_{k,q}}) \neq 0$.
Since $0$ is the greatest element, this implies $\varphi(x_{v_1}\cdots x_{v_k}) \neq 0$ for every $\{v_1,\dots,v_k\} \in E_{k,q}$.
By \eqref{akp-power}, each nonzero product $\varphi(x_{v_1}\cdots x_{v_k})$ must equal $a_1 \cdots a_{kp}$.
Thus the set $\{\varphi(x_{v_1}),\dots,\varphi(x_{v_k})\}$ consists of $k$ disjoint subwords of $a_1\cdots a_{kp}$ of length $p$, whose product equals $a_1\cdots a_{kp}$.
Consequently, for each $u \in V_{k,q}$ there exists a unique $p$-subset $v_u \subseteq [kp]$ such that $\varphi(x_u) = \prod_{i \in v_u} a_i$.
Moreover, whenever $\{u_1,\dots,u_k\} \in E_{k,q}$, the $p$-subsets $v_{u_1},\dots,v_{u_k}$ are pairwise disjoint, i.e. $\{v_{u_1},\dots,v_{u_k}\} \in E_{k,p}$.
Hence the map $u \mapsto v_u$ defines a hypergraph homomorphism from $\mathbb{H}_{k,q}$ to $\mathbb{H}_{k,p}$.
This contradicts Lemma~\ref{homgh}.  Therefore $\varphi(\mathbf{t}_{\mathbb{H}_{k,q}}) = 0$, and $A_{k,p}$ satisfies $\sigma_{k,q}$.

To summarize, we have produced a family $\{A_{k,p}\}_{p \in P}$ of semirings in $\mathcal{NV}_{k+1}$ and a family $\{\sigma_{k,p}\}_{p \in P}$ of identities such that for any two primes $p,q \in P$,
\begin{equation}\label{akpsigmakp}
     A_{k,p} \models \sigma_{k,q}\Leftrightarrow p \neq q.
\end{equation}

Finally, by \cite[Proposition 1.3]{gry}, the mapping
\[
    \varphi\colon \mathcal{P}(P)\to \mathcal{L}(\mathcal{NV}_{k+1}),\quad Q\mapsto \mathsf{V}(\{A_{k,p}\mid p\in Q\})
\]
is a lattice embedding. This implies that the subvariety lattice $\mathcal{L}(\mathcal{NV}_{k+1})$ has at least $2^{\aleph_0}$ elements, i.e., $\mathcal{NV}_{k+1}$ is of type $2^{\aleph_0}$, as required.
\end{proof}

According to \cite[Proposition 2.6]{jrz}, every flat semiring \(S_c(a_1 \cdots a_n)\) belongs to the variety of \(S_7\).
Consequently, by Theorem~\ref{th1}, \(S_7\) contains \(2^{\aleph_0}\) distinct subvarieties.
This result itself also represents one of the main results established in \cite{gjrz}.

\section{The divisibility order on the semigroup $S_c(W)$}\label{sec:division}
In this section, we will show that the semigroup $S_c(W)$, under the divisibility relation, still forms an ai-semiring, denoted by $S_c^*(W)$, and further use $S_c^*(a_1\cdots a_n)$ and Theorem~\ref{th1} to demonstrate that $S_{53}$ is of type $2^{\aleph_0}$.

For each nonempty subset $W\subseteq X_c^+$, let $\tau$ be the divisibility relation on the semigroup $S_c(W)$; that is, $(x, y) \in \tau$ if and only if there exists $z \in M_c(W)$ such that $x = yz$.
It is easy to verify that $\tau$ is a partial order on $S_c(W)$, compatible with the multiplication, and under this partial order the multiplicative zero $0$ is the greatest element.
Furthermore, we will now show that the partial order induced by $\tau$ on $S_c(W)$ is a semilattice order,
and with respect to the addition defined by $x + y = \sup\{x, y\}$, it forms an ai-semiring.
To this end we consider a more general setting.

First consider the free commutative monoid with zero $(X_c^{+})^{0}$, equipped with the divisibility order.
It is straightforward to verify that this order is a semilattice order, compatible with multiplication,
and that $0$ is the greatest element with respect to the induced addition.
Indeed, for $\mathbf{u}, \mathbf{v} \in X_c^+$, write
\[
\mathbf{u}=x_1^{k_1}x_2^{k_2}\cdots x_m^{k_m}, \qquad
\mathbf{v}=x_1^{t_1}x_2^{t_2}\cdots x_m^{t_m},
\]
where $k_i, t_i \ge 0$ for $1\le i\le m$. Then it is easy to verify that
\[
\sup\{\mathbf{u}, \mathbf{v}\}=x_1^{\max\{k_1,t_1\}}x_2^{\max\{k_2,t_2\}}\cdots x_m^{\max\{k_m,t_m\}}.
\]
If we define addition by $\mathbf{u}+\mathbf{v}=\sup\{\mathbf{u}, \mathbf{v}\}$, then multiplication distributes over addition, and $(X_c^+, +, \cdot)$ becomes an ai-semiring.

Now for any nonempty subset $W$ of $X_c^+$, set $I_W = (X_c^+)^0\backslash W^{\leq}$, it is easy to see that $I_W$ is both a multiplicative absorbing ideal and an order-theoretic filter with respect to $+$.
The ideal quotient semiring $(X_c^+)^0/I_W$ will be denoted by $S^*_c(W)$.

By definition, it is easy to verify that the semirings $S_c^*(W)$ and $S_c(W)$ share the same multiplicative structure. The induced addition operation on $S^*_c(W)$ is given as follows:
$0$ is the absorbing element and for $\mathbf{u}, \mathbf{v} \in W^{\le}$,
\[
\mathbf{u}+\mathbf{v}=
\begin{cases}
\sup\{\mathbf{u}, \mathbf{v}\}, & \text{if } \sup\{\mathbf{u}, \mathbf{v}\}\in W^{\le},\\[4pt]
0, & \text{otherwise}.
\end{cases}
\]
The semilattice order induced by this addition coincides exactly with the divisibility relation $\tau$ on the semigroup $S_c(W)$. This proves that $S_c(W)$ forms an ai-semiring under the semilattice order with respect to $\tau$.
If we allow the empty word $1$ in this construction, then $1$ becomes both the additive and the multiplicative identity. In that case we use the notation $M_c^*(W)$.

\begin{remark}
For any commutative semigroup $S$, define a binary relation $\tau$ by $(a,b)\in \tau$ if and only if $b=ac$ for some $c\in S^1$.
It is routine to verify that $\tau$ is a compatible quasi-order on $S$. Moreover, $\tau$ is a partial order exactly when $S$ is $\mathcal{J}$-trivial (i.e., $S^1aS^1=S^1bS^1$ implies $a=b$ for all $a,b\in S$). This observation naturally raises two further questions:
\begin{enumerate}[$(\rm i)$]
    \item Under what conditions does $\tau$ induce a semilattice order on $S$?
    \item If such a semilattice order exists, what additional conditions ensure that the resulting structure becomes an ai-semiring, that is, when do the induced addition and multiplication satisfy the distributive law?
\end{enumerate}
\end{remark}
It is easy to see that $M_c^*(1)$ is isomorphic to $M_2$, $M_c^*(a)$ is isomorphic to $S_{53}$,
and $M_c^*(\{a\}^+)$ is isomorphic to the max-plus semiring $(\mathbb{N}\cup\{\infty\},\max,+)$,
where addition is defined as the maximum of numbers, multiplication as the ordinary multiplication of numbers, and $\infty$ is the absorbing element for both addition and multiplication.
Aceto et al.~\cite{aei} prove that the max-plus semiring $\mathbb N$ is nonfinitely based.
However, the members of \( \mathsf{V}(\mathbb N) \), as well as its minimal nonfinitely based subvarieties, are still not clearly characterized.
Using ideal quotient of $\mathbb N$ we obtain

\begin{lemma}\label{mc*ak}
The max-plus semiring $\mathbb N$ is isomorphic to a subdirect product of the family $\{M_c^*(a^k)\}_{k \geq 0}$.
Consequently, \( M_c^*(a^k) \) belongs to the variety \(\mathsf{V}(\mathbb N)\) for every \(k \geq 0\).
\end{lemma}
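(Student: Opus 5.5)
The plan is to realize each $M_c^*(a^k)$ as an ideal quotient of the max-plus semiring $\mathbb N=(\mathbb N,\max,+)$, and then to observe that these quotients jointly separate points. First I identify $M_c^*(a^k)$ concretely. Its elements are $1=a^0,a,a^2,\dots,a^k$ and $0$. Multiplication is $a^i\cdot a^j=a^{i+j}$ if $i+j\le k$ and $0$ otherwise, with $0$ absorbing. The addition is the supremum for divisibility, so $a^i+a^j=a^{\max\{i,j\}}$, and again $0$ is absorbing. Hence $M_c^*(a^k)$ is just $\{0,1,\dots,k\}\cup\{\infty\}$ with $\max$ and truncated addition, where everything exceeding $k$ collapses to $\infty$.

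For each $k\ge 0$, I set $J_k=\{n\in\mathbb N\mid n>k\}$. This set is nonempty and proper. It is an absorbing ideal for the multiplication of $\mathbb N$ (which is ordinary $+$), since $n>k$ implies $n+m>k$. It is also an order filter for the semiring order (which is the usual order, since the semiring addition is $\max$), since $n>k$ implies $\max\{n,m\}>k$. So the ideal quotient $\mathbb N/J_k$ is defined. I then check that
\[
\pi_k\colon \mathbb N\to M_c^*(a^k),\qquad n\mapsto a^n \ (n\le k),\quad n\mapsto 0\ (n>k),
\]
is a surjective semiring homomorphism. For $\max$, both sides give $a^{\max\{m,n\}}$ when $\max\{m,n\}\le k$, and $0$ otherwise. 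For $+$, both sides give $a^{m+n}$ when $m+n\le k$, and $0$ otherwise, using that $0$ is absorbing in $M_c^*(a^k)$. Surjectivity is clear from $\pi_k(k+1)=0$ and $\pi_k(i)=a^i$.

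Next, I consider the product map $\pi=(\pi_k)_{k\ge 0}\colon\mathbb N\to\prod_{k\ge0}M_c^*(a^k)$. It is a homomorphism because each $\pi_k$ is one, and each projection composed with $\pi$ is the surjection $\pi_k$. It is injective: if $m\ne n$, say $m<n$, then taking $k=m$ gives $\pi_m(m)=a^m\ne 0=\pi_m(n)$. Thus $\mathbb N$ is a subdirect product of $\{M_c^*(a^k)\}_{k\ge0}$. The consequence is then immediate, since each $M_c^*(a^k)$ is a homomorphic image of $\mathbb N$ and so lies in $\mathsf V(\mathbb N)$.

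I expect no real obstacle here. The only point needing care is the bookkeeping identification of the divisibility-sup addition on $M_c^*(a^k)$ with $\max$ of exponents, including the element $1=a^0$ and the absorbing $0$. The $k=0$ case, which gives $M_c^*(1)\cong M_2$, fits the same formulas.
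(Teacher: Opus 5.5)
Your proof is correct and follows essentially the same route as the paper: realize each $M_c^*(a^k)$ as the ideal quotient of $\mathbb N$ by an upward-closed absorbing ideal, then use the fact that these ideals have empty intersection (your explicit separation of $m<n$ via $\pi_m$) to obtain a subdirect embedding. Your choice $J_k=\{n>k\}=\mathbb N_{\ge k+1}$ is actually the accurate indexing. The paper's quotient $\mathbb N/\mathbb N_{\ge k}$ is $M_c^*(a^{k-1})$, and it degenerates at $k=0$. Your explicit homomorphism check also fills in what the paper leaves as routine.
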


\begin{proof}
For each \(k \geq 0\), observe that \(\mathbb N_{\geq k}\) is both a multiplicative ideal and an order filter of \(\mathbb N\), and the quotient \(\mathbb N/\mathbb N_{\geq k}\) is isomorphic to \(M_c^*(a^k)\).
Since $\bigcap_{k \geq 0} \mathbb N_{\geq k} = \varnothing$, the natural map $\varphi \colon \mathbb N \to \prod_{k \geq 0} \mathbb N/\mathbb N_{\geq k}$ defined by $\varphi(n) = (n/\mathbb N_{\geq k})_{k \geq 0}$ is a subdirect embedding.
Hence, the statement follows.
\end{proof}

\begin{remark}\label{n=ninfty}
If we replace \(\mathbb N_{\geq k}\) by \(\mathbb N_{\geq k} \cup \{\infty\}\), it is easy to verify that the max-plus semiring \(\mathbb N \cup \{\infty\}\) is also isomorphic to a subdirect product of the family \(\{M_c^*(a^k)\}_{k \geq 0}\).
Consequently, the max-plus semirings \(\mathbb N\) and \(\mathbb N \cup \{\infty\}\) generate the same variety.
An alternative proof of this fact will be given at the end of this section.
\end{remark}

In what follows, we shall show that $S_{53}$ and  $\mathbb N$ are of type \( 2^{\aleph_0} \).
We first give an analogous version of \cite[Proposition 2.6]{jrz}, which states that the flat semiring \( S_{c}(a_1\cdots a_n) \) lies in the variety \( \mathsf{V}(S_7) \) for each \( n\geq 1 \).

\begin{lemma}\label{sc*a1ak}
The semiring \( S_c^*(a_1\cdots a_n) \) lies in the variety \( \mathsf{V}(S_{53}) \) for each \( n\geq 1 \).
\end{lemma}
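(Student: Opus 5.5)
Since $S_{53}\cong M_c^*(a)$, the plan is to realize $S_c^*(a_1\cdots a_n)$ inside the direct power $(S_{53})^n$. Identify $S_{53}$ with $\{1,a,0\}$ carrying the chain order $1<a<0$, so that addition is maximum and multiplication is ``add exponents, truncating at $a^2=0$''. In other words, $S_{53}$ is $\{a^0,a^1,a^{\ge 2}\}$ with max and truncated addition of exponents. An element of $(S_{53})^n$ is then a vector $(e_1,\dots,e_n)$ with $e_i\in\{0,1,\infty\}$, where $\infty$ stands for $0\in S_{53}$. In this language, addition is the coordinatewise max and multiplication is coordinatewise addition truncated to $\infty$ once a coordinate reaches $2$.

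Under this dictionary, $S_c^*(a_1\cdots a_n)$ is the set of $0/1$ exponent vectors other than the all-zero one, together with a zero element. Its sum is the coordinatewise max and its product is coordinatewise addition; in both cases the result becomes $0$ as soon as some exponent would be $\ge 2$. Note that the max of two $0/1$ vectors is again $0/1$, so the only addition that yields $0$ is one involving $0$ itself. This does not quite match the definition of $\sup$ in $W^{\le}$, but for $W=a_1\cdots a_n$ the supremum of two subwords is always a subword. The real discrepancy is therefore only with $0$.

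In $(S_{53})^n$, the set $T$ of vectors in $\{0,1\}^n\setminus\{\mathbf 0\}$ together with all vectors having at least one coordinate equal to $\infty$ forms a subsemiring. Closure of $T$ is immediate: the product of two $0/1$ vectors either stays $0/1$ or creates a $2$, i.e.\ an $\infty$ coordinate; and $\infty$ is absorbing for both operations in each coordinate. Next, let $J=T\cap\{\text{vectors with some }\infty\}$. The set $J$ is a multiplicative ideal, because $\infty$ absorbs under multiplication. It is also an order filter, because $\infty$ is the top element of each coordinate and sums are coordinatewise max. Hence the Rees-type quotient $T/J$ exists: the congruence collapsing $J$ is compatible with $+$ because $J$ is closed upward and $x+j\in J$ for all $j\in J$, and it is compatible with $\cdot$ because $J$ is an ideal. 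The map sending $a_{i_1}\cdots a_{i_r}$ to its characteristic vector and $0$ to the class $J$ is then an isomorphism from $S_c^*(a_1\cdots a_n)$ onto $T/J$, by the computation in the previous paragraph. Therefore $S_c^*(a_1\cdots a_n)\in \mathsf{HSP}(S_{53})=\mathsf V(S_{53})$.

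The main point to check carefully is that collapsing $J$ really is a semiring congruence, which needs both the ideal and the filter property, and that multiplication in $T/J$ agrees with that of $S_c$. Specifically, a product of two $0/1$ vectors lands in $J$ exactly when their supports overlap, and that is exactly when the product of the corresponding words is $0$ in $S_c(a_1\cdots a_n)$.
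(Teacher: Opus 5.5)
Your proposal is correct and takes essentially the same route as the paper. The paper forms the subsemiring $A_n$ of $(S_{53})^n$ generated by $\alpha_i=(1,\dots,1,a,1,\dots,1)$ and factors out the ideal/filter $J_n$ of vectors with a coordinate equal to $0$; your explicitly described $T$ is exactly this $A_n$ (all vectors other than $(1,\dots,1)$), your $J$ is $J_n$, and you simply spell out the closure, congruence, and isomorphism checks that the paper calls routine.
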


\begin{proof}
For any positive integer \( n \), let \( S_n \) denote the direct product of \( n \) copies of \( S_{53} \), and let \( A_n \) be the subsemiring generated by the \( n \) elements
\[
\alpha_1 = (a,1,1,\ldots,1),\; \alpha_2 = (1,a,1,\ldots,1),\;\ldots,\; \alpha_n = (1,1,\ldots,1,a).
\]
Let \( J_n \) be the set of all elements of \( A_n \) that have a coordinate equal to \( 0 \). Then \( J_n \) is both a multiplicative ideal and an order-theoretic filter of \( A_n \).

A routine verification shows that the ideal quotient \( A_n/J_n \) consists of \( J_n \) together with singleton sets corresponding to each subword of \( a_1\cdots a_n \). Hence \( A_n/J_n \) is isomorphic to \( S_c^*(a_1\cdots a_n) \) under the natural homomorphism determined by \( a_i \mapsto \{\alpha_i\} \) for \( 1\le i\le n \).
Therefore, \( S_c^*(a_1\cdots a_n) \in \mathsf{V}(S_{53}) \), as required.
\end{proof}

\begin{theorem}\label{ths53}
For each $k\geq 3$, let $\mathbf{N}_{k+1}$ be the subvariety of $\mathsf{V}(S_{53})$ determined by the $(k+1)$-nilpotent identity. Then the interval $[\mathsf{V}(S_c^*(a_1\cdots a_k)),\mathbf{N}_{k+1}]$ has the cardinality of the continuum.
In particular, $S_{53}$ is a finitely based finite semiring of type $2^{\aleph_0}$.
\end{theorem}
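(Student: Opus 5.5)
The plan is to run the argument of Theorem~\ref{th1} inside $\mathsf{V}(S_{53})$, taking $S_{kp}=S_c^*(a_1\cdots a_{kp})$. These lie in $\mathsf{V}(S_{53})$ by Lemma~\ref{sc*a1ak}, and their multiplicative reduct is $S_c(a_1\cdots a_{kp})$, as Theorem~\ref{th1} requires. For each $k\ge 3$ and prime $p$, let $A_{k,p}$ be the subsemiring of $S_c^*(a_1\cdots a_{kp})$ generated by the products $a_{i_1}\cdots a_{i_p}$ over the $p$-subsets $\{i_1,\dots,i_p\}\subseteq[kp]$. Lemma~\ref{0-max} then gives \eqref{akp-power}, so $A_{k,p}\in\mathbf{N}_{k+1}$. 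The Kneser hypergraph identities $\sigma_{k,q}$ together with Lemma~\ref{homgh} give the separation \eqref{akpsigmakp}. By \cite[Proposition 1.3]{gry}, the map $Q\mapsto \mathsf{V}(\{A_{k,p}\mid p\in Q\})$ is then a lattice embedding of $\mathcal{P}(P)$ into $\mathcal{L}(\mathbf{N}_{k+1})$.

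The new ingredient is the lower end of the interval. We must show that $\mathsf{V}(S_c^*(a_1\cdots a_k))\subseteq \mathsf{V}(A_{k,p})$ for every prime $p$. The varieties $\mathsf{V}(S_c^*(a_1\cdots a_k))\vee\mathsf{V}(\{A_{k,p}\mid p\in Q\})$ then lie in the interval and are still pairwise distinct, because $S_c^*(a_1\cdots a_k)$ satisfies every $\sigma_{k,q}$.

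To show $S_c^*(a_1\cdots a_k)$ satisfies $\sigma_{k,q}$: any assignment making some term of $\mathbf{t}_{\mathbb{H}_{k,q}}$ nonzero sends each of the $k$ variables in that hyperedge to a single letter. Since this holds for every hyperedge, it would give a homomorphism $\mathbb{H}_{k,q}\to\mathbb{H}_{k,1}$, which is impossible by Lemma~\ref{homgh}, because $q\nmid 1$. So $\varphi(\mathbf{t})=0$ and the identity holds.

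For the containment itself, fix a partition of $[kp]$ into $k$ blocks $B_1,\dots,B_k$ of size $p$, and put $b_j=\prod_{i\in B_j}a_i\in A_{k,p}$. I would show that the set $\{0\}$ together with the products $\prod_{j\in J}b_j$, for nonempty $J\subseteq[k]$, is a subsemiring of $A_{k,p}$ isomorphic to $S_c^*(a_1\cdots a_k)$ via $a_j\mapsto b_j$.

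Multiplication is clear. Addition is the divisibility supremum. For distinct subsets $J,J'$ the supremum of the corresponding monomials is $\prod_{j\in J\cup J'}b_j$, since the blocks are disjoint. This matches the supremum in $S_c^*(a_1\cdots a_k)$. Any sum that is not multilinear becomes $0$ in both semirings.

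The main point to check carefully is exactly this: that sums computed inside $A_{k,p}$ agree with the ambient divisibility order, and that $0$ arises in the same cases. I expect this to be routine from the explicit formula for $\sup$.

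Finally, $S_{53}$ is finitely based because it is a $3$-element ai-semiring other than $S_7$, which is covered by \cite{zrc}. Since $\mathbf{N}_{k+1}\subseteq\mathsf{V}(S_{53})$ has continuum many subvarieties, $S_{53}$ is of type $2^{\aleph_0}$.
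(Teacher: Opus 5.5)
Your proposal is correct and follows essentially the same route as the paper. You take $S_{kp}=S_c^*(a_1\cdots a_{kp})\in\mathsf{V}(S_{53})$, run the Kneser-hypergraph argument of Theorem~\ref{th1}, and place $S_c^*(a_1\cdots a_k)$ at the bottom of the interval by realizing it as the subsemiring of $A_{k,p}$ generated by $k$ disjoint length-$p$ block words. Two small differences: taking the join with $\mathsf{V}(S_c^*(a_1\cdots a_k))$ handles the case $Q=\varnothing$ cleanly, which the paper glosses over; and your separate check that $S_c^*(a_1\cdots a_k)$ satisfies every $\sigma_{k,q}$ is valid but redundant, since it already follows from the embedding into $A_{k,p}$ with $p\neq q$.
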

\begin{proof}
First, by Lemma~\ref{sc*a1ak} we have $S_c^*(a_1\cdots a_n) \in \mathsf{V}(S_{53})$ for each positive integer $n$.
Then, by Theorem~\ref{th1}, the ai-semiring $S_{53}$ is of type $2^{\aleph_0}$.

Recall that in the proof of Theorem~\ref{th1} we used the family $\{A_{k,p}\}_{p\in P}$  to embed the power set $\mathcal{P}(P)$ as a lattice into $\mathbf{N}_{k+1}$.
To show that $[\mathsf{V}(S_c^*(a_1\cdots a_k)), \mathbf{N}_{k+1}]$ has the cardinality of the continuum for each $k\geq 3$, it suffices to prove that $S_c^*(a_1\cdots a_k)$ is a subalgebra of $A_{k,p}$ for every prime $p$.

Indeed, take the word $a_1 a_2 \cdots a_{kp}$ and choose $k$ pairwise disjoint subwords
\[
\mathbf{u}_1, \mathbf{u}_2, \dots, \mathbf{u}_k,
\]
each of length $p$. (For example, one may take $\mathbf{u}_i = a_{(i-1)p+1} a_{(i-1)p+2} \cdots a_{ip}$ for $i=1,\dots,k$.)
It is straightforward to verify that the subsemiring generated by $\{\mathbf{u}_1,\dots,\mathbf{u}_k\}$ inside $A_{k,p}$ is isomorphic to $S_c^*(a_1\cdots a_k)$; consequently, $S_c^*(a_1\cdots a_k)$ is a subalgebra of $A_{k,p}$.

Therefore, the lattice embedding constructed in Theorem~\ref{th1} restricts to an embedding of $\mathcal{P}(P)$ into the interval $[\mathsf{V}(S_c^*(a_1\cdots a_k)), \mathbf{N}_{k+1}]$, which forces this interval to have cardinality $2^{\aleph_0}$.
\end{proof}

Since \( S_c^*(a_1\cdots a_k) \) is a subsemiring of \( A_{k,p} \) for every \( p \in P \), it follows from~\eqref{akpsigmakp} that the identities \( \{\sigma_{k,p}\}_{p \in P} \) are all satisfied by \( S_c^*(a_1\cdots a_k) \). Moreover, this set of identities is \emph{irreducible within} \( \mathbf{N}_{k+1} \); that is, for each \( q \in P \), the set $\mathsf{Id}(\mathbf{N}_{k+1})$ together with $\{\sigma_{k,p} \mid p \in P \setminus \{q\}\}$ does not imply the identity $\sigma_{k,q}$.

This is because \( A_{k,q} \) belongs to \( \mathbf{N}_{k+1} \), satisfies every identity \( \sigma_{k,p} \) with \( p \neq q \), but fails to satisfy \( \sigma_{k,q} \).

This shows that different subsets of \( \{\sigma_{k,p}\}_{p \in P} \) define different subvarieties within $\mathbf{N}_{k+1}$.
Consequently, we obtain a lattice embedding

\begin{equation}\label{euqaembd}
\psi\colon \mathcal{P}(P) \rightarrow \bigl[\mathsf{V}(S_c^*(a_1\cdots a_k)), \mathbf{N}_{k+1}\bigr], \quad Q \mapsto \Mod(\sigma_{k,p} \mid p \in Q) \cap \mathbf{N}_{k+1}.
\end{equation}
Furthermore, since \( \mathbf{N}_{k+1} \) is itself finitely based, say \( \mathbf{N}_{k+1} = \Mod(\Sigma) \) for some finite set \( \Sigma \) of identities, we have
\[
\psi(Q) = \Mod\bigl( \Sigma \cup \{\sigma_{k,p} \mid p \in Q\} \bigr).
\]
By discarding redundant identities from \( \Sigma \), we can extract a subset \( \Sigma_Q \subset \Sigma \) such that \( \Sigma_Q \cup \{\sigma_{k,p} \mid p \in Q\} \) forms an irredundant basis for \( \psi(Q) \).

Let \( \mathcal{P}_{\mathrm{inf}}(P) \) denote the family of all infinite subsets of \( P \). Since \( \mathcal{P}_{\mathrm{inf}}(P) \) itself has the cardinality of the continuum, we have established the following result.
\begin{proposition}\label{syntax}
For every integer \( k \geq 3 \), the interval \( [\mathsf{V}(S_c^*(a_1\cdots a_k)), \mathbf{N}_{k+1}] \) contains continuum many varieties $\{\psi(Q)\}_{Q \in \mathcal{P}_{\mathrm{inf}}(P)}$, each admitting an infinite irredundant basis.
\end{proposition}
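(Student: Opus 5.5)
The plan is to verify two properties of the family $\{\psi(Q)\}_{Q\in\mathcal{P}_{\mathrm{inf}}(P)}$: the varieties are pairwise distinct, and each one has an infinite irredundant basis. Distinctness comes from the embedding~\eqref{euqaembd}. This map is injective because of the irreducibility property: if $Q\neq Q'$, say $q\in Q\setminus Q'$, then $A_{k,q}\in\mathbf{N}_{k+1}$ satisfies every $\sigma_{k,p}$ with $p\neq q$ (by~\eqref{akpsigmakp}), hence $A_{k,q}\in\psi(Q')$. But $A_{k,q}$ fails $\sigma_{k,q}$, so $A_{k,q}\notin\psi(Q)$. Every $\psi(Q)$ lies in the interval, because $S_c^*(a_1\cdots a_k)$ satisfies all the $\sigma_{k,p}$ and belongs to $\mathbf{N}_{k+1}$. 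Since $P$ is countably infinite, $\mathcal{P}_{\mathrm{inf}}(P)$ has cardinality $2^{\aleph_0}$, so this gives continuum many distinct varieties.

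For the basis, fix an infinite $Q$ and start from the basis $\Sigma\cup\{\sigma_{k,p}\mid p\in Q\}$ of $\psi(Q)$, where $\Sigma$ is the finite basis of $\mathbf{N}_{k+1}$ (finite because $S_{53}$ is finitely based and we add one nilpotent identity). First I would show that no $\sigma_{k,q}$ with $q\in Q$ is redundant. Suppose $\sigma_{k,q}$ followed from $\Sigma\cup\{\sigma_{k,p}\mid p\in Q\setminus\{q\}\}$ together with the other remaining identities. Then $A_{k,q}$ would satisfy $\sigma_{k,q}$, since it satisfies $\Sigma$ and all $\sigma_{k,p}$ with $p\neq q$; this is a contradiction. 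This argument still works after any identities of $\Sigma$ are discarded, because we only ever discard elements of $\Sigma$.

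Next, I would discard redundant identities from the finite set $\Sigma$ one at a time. At each step I remove an identity of $\Sigma$ that is implied by the current remaining set, and the process stops after finitely many steps because $\Sigma$ is finite. The resulting $\Sigma_Q\cup\{\sigma_{k,p}\mid p\in Q\}$ still defines $\psi(Q)$. No member of $\Sigma_Q$ is implied by the others, because otherwise it would have been removed. No $\sigma_{k,q}$ is implied by the others, by the previous step. So this basis is irredundant, and it is infinite because $Q$ is infinite.

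The main obstacle is the greedy removal in the last step. The concern is that removing one identity from $\Sigma$ could make another, previously redundant identity essential. This does no harm, because at termination we only need that each remaining identity is not a consequence of all the others, and that holds by construction. The one point to check is that the removal never touches the $\sigma$'s. That is true by design, and their irredundance is witnessed semantically by the algebras $A_{k,q}$, so it does not depend on which part of $\Sigma$ survives.
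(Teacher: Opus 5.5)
Your proposal is correct and follows the paper's own argument. Distinctness and membership in the interval come from the witnesses $A_{k,q}$ together with the fact that $S_c^*(a_1\cdots a_k)$ is a subsemiring of every $A_{k,p}$, and irredundance comes from pruning the finite basis $\Sigma$ of $\mathbf{N}_{k+1}$ while each $\sigma_{k,q}$ stays essential because of $A_{k,q}$. You also make explicit a point the paper leaves tacit, namely that pruning $\Sigma$ cannot make any $\sigma_{k,q}$ redundant; one wording fix is that ``implied by the current remaining set'' should read ``implied by the other remaining identities''.
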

\begin{remark}
In fact, through the analysis above, we have obtained a general conclusion: whenever we have constructed a lattice embedding $\psi$ from the power set $\mathcal{P}(I)$ of some infinite set $I$ into a certain subvariety lattice via identities (as in~\eqref{euqaembd}), then the image under $\psi$ of every element of $\{\mathcal{P}_{{\rm inf}}(I)\}$ is nonfinitely based. Moreover, if the variety $\mathcal{V}$ itself is finitely based, then each of its images also admits an infinite irredundant basis of identities.
\end{remark}

By Lemma~\ref{mc*ak} and Theorem~\ref{ths53}, we immediately have
\begin{proposition}
The max-plus semiring $(\mathbb{N},\max,+)$ is of type $2^{\aleph_0}$.
\end{proposition}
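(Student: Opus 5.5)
The plan is to apply Theorem~\ref{th1} directly to $\mathcal{V}=\mathsf{V}(\mathbb N)$. It suffices to show that, for each positive integer $n$, the variety $\mathsf{V}(\mathbb N)$ contains an ai-semiring whose multiplicative reduct is $S_c(a_1\cdots a_n)$. The natural candidate is $S_c^*(a_1\cdots a_n)$. So the whole task reduces to showing $S_c^*(a_1\cdots a_n)\in\mathsf{V}(\mathbb N)$.

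First, Lemma~\ref{mc*ak} with $k=1$ gives $M_c^*(a)\in\mathsf{V}(\mathbb N)$. As noted in the paper, $M_c^*(a)$ is isomorphic to $S_{53}$. Hence $S_{53}\in\mathsf{V}(\mathbb N)$, and therefore $\mathsf{V}(S_{53})\subseteq\mathsf{V}(\mathbb N)$.

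Next, Lemma~\ref{sc*a1ak} places $S_c^*(a_1\cdots a_n)$ in $\mathsf{V}(S_{53})$ for every $n\ge 1$. By the previous step, $S_c^*(a_1\cdots a_n)$ then lies in $\mathsf{V}(\mathbb N)$. Its multiplicative reduct is $S_c(a_1\cdots a_n)$ by construction of $S_c^*$. So the hypothesis of Theorem~\ref{th1} holds, and $\mathsf{V}(\mathbb N)$ is of type $2^{\aleph_0}$.

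More directly, Theorem~\ref{ths53} already shows that $\mathsf{V}(S_{53})$ has continuum many subvarieties. Every subvariety of $\mathsf{V}(S_{53})$ is also a subvariety of $\mathsf{V}(\mathbb N)$, so the lattice $\mathcal{L}(\mathsf{V}(\mathbb N))$ has at least $2^{\aleph_0}$ elements. It has at most $2^{\aleph_0}$ elements, since identities over a countable alphabet form a countable set. This gives cardinality exactly $2^{\aleph_0}$.

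The only point needing care is the isomorphism $M_c^*(a)\cong S_{53}$, i.e.\ checking it against Table~\ref{tb2}. The elements $1, a, 0$ of $M_c^*(a)$ form a chain under divisibility with $0$ on top, so they match the chain $1<a<0$ of $S_{53}$. Multiplication also agrees, since $a\cdot a=a^2\notin\{a\}^{\le}\cup\{1\}$ and so equals $0$. Under the max-plus picture $\mathbb N/\mathbb N_{\ge 2}$ this is $\{0,1,\text{absorbing}\}$, which confirms the identification. Beyond this check the argument is immediate.
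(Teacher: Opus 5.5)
Your proposal is correct and follows the paper's argument. The paper deduces the proposition immediately from Lemma~\ref{mc*ak} (which gives $S_{53}\cong M_c^*(a)\in\mathsf{V}(\mathbb N)$) and Theorem~\ref{ths53}, exactly as in your second route; your first route just unpacks the proof of Theorem~\ref{ths53} via Lemma~\ref{sc*a1ak} and Theorem~\ref{th1}. Your explicit identification of $M_c^*(a)$ with $\mathbb N/\mathbb N_{\ge 2}$ is the right one: the quotient by $\mathbb N_{\ge k}$ is really $M_c^*(a^{k-1})$, which is an index slip in the statement of Lemma~\ref{mc*ak} that does not affect its conclusion.
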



At the end of this section, we conclude with the regularization of semirings.
Note that from the Remark~\ref{n=ninfty} the max-plus semirings $\mathbb N$ and $\mathbb N\cup \{\infty\}$ generates the same variety.
To investigate the relationship between them, we introduce the general situation.
For a semiring $S$, denote by $S^{\infty}$ the semiring obtained by adjoining a new element $\infty$, which is absorbing element for both addition and multiplication. The following observation relates the variety generated by $S^{\infty}$ to that generated by $S$ and $M_2$.

\begin{proposition}\label{reg-construction}
For any semiring $S$, $\mathsf{V}(S^{\infty}) = \mathsf{V}(S, M_2)$.
\end{proposition}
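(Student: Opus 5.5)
The equality $\mathsf{V}(S^{\infty}) = \mathsf{V}(S, M_2)$ will follow from two inclusions.

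For $\mathsf{V}(S, M_2)\subseteq \mathsf{V}(S^{\infty})$ it suffices to show that $S$ and $M_2$ both lie in $\mathsf{V}(S^{\infty})$.
\begin{itemize}
\item $S$ is a subsemiring of $S^{\infty}$: sums and products of elements of $S$ are computed inside $S$.
\item The set $\{\infty\}$ is a multiplicative absorbing ideal and an order filter (as $\infty$ is additively absorbing), so it is a congruence class. Equivalently, the partition $\{S,\{\infty\}\}$ is a congruence on $S^{\infty}$, because $S$ is closed under $+$ and $\cdot$ and $\infty$ absorbs both operations. The quotient is a two-element semiring $\{1,\infty\}$ in which $\infty$ is absorbing for both operations and $1\cdot 1=1+1=1$. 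This is exactly $M_2$, with $\infty$ playing the role of $0$. Hence $M_2\in\mathsf{V}(S^{\infty})$.
\end{itemize}
(If $M_2$ is only ever used through its identities, one can argue on identities instead: $S^{\infty}$ satisfies only regular identities, since setting one variable to $\infty$ and the rest arbitrary detects occurrence sets.)

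For $\mathsf{V}(S^{\infty})\subseteq\mathsf{V}(S,M_2)$, I would realize $S^{\infty}$ inside $S^{\infty}\times M_2$ in an equational way. A cleaner route is to show that every identity $\mathbf{u}\approx\mathbf{v}$ holding in both $S$ and $M_2$ holds in $S^{\infty}$.
\begin{enumerate}
\item By \cite[Lemma 1.1]{sr}, the identity is regular, so $c(\mathbf{u})=c(\mathbf{v})$.
\item Take an assignment into $S^{\infty}$. If some variable of $c(\mathbf{u})$ is sent to $\infty$, every word of $\mathbf{u}$ and $\mathbf{v}$ containing it evaluates to $\infty$. Since the sums are over all words and $\infty$ is additively absorbing, both sides equal $\infty$. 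Here we use only that such a variable occurs in some word of each side, which is exactly regularity.
\item If no variable of $c(\mathbf{u})$ is sent to $\infty$, the assignment lands in $S$ (variables outside the content are irrelevant), and the identity holds because it holds in $S$.
\end{enumerate}
Hence $\mathsf{Id}(S,M_2)\subseteq\mathsf{Id}(S^{\infty})$, which gives the inclusion by Birkhoff's theorem.

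The main subtlety is step 2, namely checking that a variable sent to $\infty$ forces both sides to $\infty$. This is where regularity is essential. I would also remark that $S$ need not be additively idempotent: the argument above uses only absorption. For general semirings, however, $M_2$ should be understood as the two-element flat semiring $\{0,1\}$, and the characterization of its identities as regular ones still holds for semiring terms written as sums of words with multiplicities.
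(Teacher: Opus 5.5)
Your proof is correct and follows the paper's argument essentially step for step. For $\mathsf{V}(S,M_2)\subseteq\mathsf{V}(S^{\infty})$ you use that $S$ is a subsemiring and that the congruence with classes $S$ and $\{\infty\}$ has quotient $M_2$. For the reverse inclusion you use regularity of the identities of $M_2$, then split assignments into those sending a variable of $c(\mathbf{u})$ to $\infty$ (both sides become $\infty$) and those landing in $S$.

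One small slip: $\{\infty\}$ being an ideal and filter is not ``equivalent'' to the partition $\{S,\{\infty\}\}$ being a congruence, since collapsing $\{\infty\}$ alone just returns $S^{\infty}$. The justification you give right afterwards, that $S$ is closed under both operations and $\infty$ absorbs them, is the correct one.
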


\begin{proof}
By construction, $S$ is a subsemiring of $S^{\infty}$. Moreover, let $\rho$ denote the equivalence relation corresponding to the partition $\{S,\{\infty\}\}$ of $S^{\infty}$. One easily verifies that $\rho$ is a congruence on $S^{\infty}$, and that the quotient $S^{\infty}/\rho$ is isomorphic to $M_2$. Hence $S$ and $M_2$ lie in $\mathsf{V}(S^{\infty})$, which implies $\mathsf{V}(S,M_2) \subseteq \mathsf{V}(S^{\infty})$.

For the reverse inclusion, assume that an identity $\mathbf{u} \approx \mathbf{v}$ holds in both $S$ and $M_2$. Then $c(\bu)=c(\bv)$. Let $\varphi : c(\mathbf{u}) \to S^{\infty}$ be an arbitrary assignment. If $\varphi(x) = \infty$ for some variable $x \in c(\mathbf{u})$, then the absorbing property of $\infty$ forces $\varphi(\mathbf{u}) = \varphi(\mathbf{v}) = \infty$. Otherwise, $\varphi$ takes all variables into $S$; viewing $\varphi$ as an assignment into $S$, the fact that $S$ satisfies $\mathbf{u} \approx \mathbf{v}$ gives $\varphi(\mathbf{u}) = \varphi(\mathbf{v})$. Thus $S^{\infty}$ satisfies every identity that holds in $S$ and $M_2$, whence $\mathsf{V}(S^{\infty}) \subseteq \mathsf{V}(S,M_2)$.

Consequently, $\mathsf{V}(S^{\infty}) = \mathsf{V}(S, M_2)$.
\end{proof}
Proposition~\ref{reg-construction} therefore implies that the identities satisfied by $S^{\infty}$ are precisely the regular identities satisfied by $S$.
In this sense, $S^{\infty}$ may be viewed as the \emph{regularization} of $S$. Also we have the following Corollary.
\begin{corollary}\label{sinfty=s}
For any smeiring $S$, $\mathsf{V}(S)=\mathsf{V}(S^{\infty})$ if and only if $M_2\in \mathsf{V}(S)$.
\end{corollary}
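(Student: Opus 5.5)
The corollary follows almost directly from Proposition~\ref{reg-construction}, which gives $\mathsf{V}(S^{\infty})=\mathsf{V}(S,M_2)$. So the statement reduces to showing that
\[
\mathsf{V}(S)=\mathsf{V}(S,M_2) \iff M_2\in\mathsf{V}(S).
\]
I would prove the two directions separately, using only the definition of the variety generated by a class.

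For the forward direction, assume $\mathsf{V}(S)=\mathsf{V}(S^{\infty})$. By Proposition~\ref{reg-construction}, $M_2\in\mathsf{V}(S,M_2)=\mathsf{V}(S^{\infty})=\mathsf{V}(S)$. To be self-contained I would also note why $M_2\in\mathsf{V}(S^{\infty})$ directly. In the proof of Proposition~\ref{reg-construction}, $M_2\cong S^{\infty}/\rho$, where $\rho$ is the congruence with classes $S$ and $\{\infty\}$. So $M_2$ is a homomorphic image of $S^{\infty}$.

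For the converse, assume $M_2\in\mathsf{V}(S)$. Since $S\in\mathsf{V}(S)$ as well, and $\mathsf{V}(S,M_2)$ is the smallest variety containing both, we get $\mathsf{V}(S,M_2)\subseteq\mathsf{V}(S)$. The reverse inclusion $\mathsf{V}(S)\subseteq\mathsf{V}(S,M_2)$ is trivial. Applying Proposition~\ref{reg-construction} then gives $\mathsf{V}(S)=\mathsf{V}(S^{\infty})$.

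There is no real obstacle here; the only point to check is that Proposition~\ref{reg-construction} holds for an arbitrary semiring $S$ with no extra hypotheses, which is how it was stated. As a sanity remark, I would add the equivalent syntactic form: $M_2\in\mathsf{V}(S)$ holds exactly when every identity of $S$ is regular. This is because $M_2$ satisfies precisely the regular identities (\cite[Lemma 1.1]{sr}). That form connects the corollary to Remark~\ref{n=ninfty}: $\mathbb N$ satisfies only regular identities, since assigning $0$ to a variable and $1$ to the others detects its occurrence. Hence $M_2\in\mathsf{V}(\mathbb N)$, and so $\mathsf{V}(\mathbb N)=\mathsf{V}(\mathbb N\cup\{\infty\})$.
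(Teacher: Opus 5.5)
Your argument is correct and is exactly the intended derivation: the paper gives no separate proof and treats the corollary as immediate from Proposition~\ref{reg-construction}, since $\mathsf{V}(S^{\infty})=\mathsf{V}(S,M_2)$ coincides with $\mathsf{V}(S)$ precisely when $M_2\in\mathsf{V}(S)$. Your side remark, which does not affect the corollary, has one slip: in $(\mathbb N,\max,+)$ the assignment that detects an occurrence of $x$ is $x\mapsto 1$ with all other variables $\mapsto 0$, and your reversed assignment does not separate $xy\approx y$; the paper instead obtains $M_2\cong M_c^*(1)\in\mathsf{V}(\mathbb N)$ from Lemma~\ref{mc*ak}.
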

Combining Lemma~\ref{mc*ak} and Corollary~\ref{sinfty=s}, we obtain
\begin{proposition}
The max-plus semirings $\mathbb{N}\cup \{\infty\}$ and $\mathbb{N}$ generate the same variety of semirings.
\end{proposition}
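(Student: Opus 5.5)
By Corollary~\ref{sinfty=s}, applied with $S=\mathbb N$, it suffices to show that $M_2\in\mathsf{V}(\mathbb N)$. Once that is done, $\mathsf{V}(\mathbb N)=\mathsf{V}(\mathbb N^{\infty})$. The semiring $\mathbb N^{\infty}$ is exactly the max-plus semiring $\mathbb N\cup\{\infty\}$: the element $\infty$ is the largest element for $\max$ and absorbing for $+$. So the statement follows.

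To obtain $M_2\in\mathsf{V}(\mathbb N)$, I would use Lemma~\ref{mc*ak}. It gives $M_c^*(a^k)\in\mathsf{V}(\mathbb N)$ for every $k\ge 0$, and in particular for $k=0$. By the observation preceding Lemma~\ref{mc*ak}, $M_c^*(1)\cong M_2$.

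As a sanity check, I would verify this directly rather than rely only on the notation. Take the quotient $\mathbb N/\mathbb N_{\ge 1}$. Its classes are $\{0\}$ and $\mathbb N_{\ge1}$. The element $0$ is the multiplicative (max-plus) identity and the additive least element. The collapsed class $\mathbb N_{\ge 1}$ is absorbing for both operations. This is precisely the two-element semiring $M_2$.

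Alternatively, I could argue through identities. Every identity of $\mathbb N$ is regular: if a variable $x$ occurs on one side but not the other, substitute a huge value for $x$ and $0$ for all other variables. The two sides then evaluate differently in max-plus. By \cite[Lemma 1.1]{sr}, regular identities hold in $M_2$, so $M_2\in\mathsf{V}(\mathbb N)$.

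There is no real obstacle here. The only point needing care is identifying $\mathbb N\cup\{\infty\}$ with the construction $\mathbb N^{\infty}$, that is, checking that $\infty$ is absorbing for both $\max$ and ordinary addition. This holds by definition.
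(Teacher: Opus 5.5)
Your proposal is correct and is exactly the paper's argument: apply Corollary~\ref{sinfty=s} with $S=\mathbb N$, and get $M_2\in\mathsf{V}(\mathbb N)$ from Lemma~\ref{mc*ak} at $k=0$ together with $M_c^*(1)\cong M_2$. Your direct check that $\mathbb N/\mathbb N_{\ge 1}\cong M_2$ is a worthwhile safeguard, because the proof of Lemma~\ref{mc*ak} as written is off by one: $\mathbb N/\mathbb N_{\ge k}$ is actually $M_c^*(a^{k-1})$, and for $k=0$ it is trivial. The lemma's conclusion that $M_c^*(a^k)\in\mathsf{V}(\mathbb N)$ for all $k\ge 0$ is nonetheless unaffected.
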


%
In fact, according to \cite[Theorem 1]{ge}, if $M_2 \notin \mathsf{V}(S)$, then $\mathsf{V}(S^{\infty})$ covers $\mathsf{V}(S)$ in the semiring variety.
Hence, we have the following result.

\begin{proposition}
For any semiring $S$, there are at most two members in the interval $[\mathsf{V}(S),\mathsf{V}(S^{\infty})]$.
\end{proposition}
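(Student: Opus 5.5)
The plan is to split according to whether $M_2$ belongs to $\mathsf{V}(S)$, using Corollary~\ref{sinfty=s} for one case and the cited covering result \cite[Theorem 1]{ge} for the other. We first note that the interval $[\mathsf{V}(S),\mathsf{V}(S^{\infty})]$ makes sense, since $S$ is a subsemiring of $S^{\infty}$ and hence $\mathsf{V}(S)\subseteq\mathsf{V}(S^{\infty})$.

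\emph{Case 1: $M_2\in\mathsf{V}(S)$.} Corollary~\ref{sinfty=s} gives $\mathsf{V}(S)=\mathsf{V}(S^{\infty})$. The interval then collapses to a single point, so it has exactly one member.

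\emph{Case 2: $M_2\notin\mathsf{V}(S)$.} The inclusion $\mathsf{V}(S)\subseteq\mathsf{V}(S^{\infty})$ is strict, because $M_2\in\mathsf{V}(S^{\infty})$ by Proposition~\ref{reg-construction}. By \cite[Theorem 1]{ge}, $\mathsf{V}(S^{\infty})$ covers $\mathsf{V}(S)$ in the lattice of semiring varieties. Hence no variety lies strictly between them, and the interval consists of exactly the two endpoints.

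In both cases the interval has at most two members. The only step that is not purely formal is the covering property in Case 2, and this is quoted directly from \cite{ge}. If one wanted a self-contained argument, the route would be as follows. Let $\mathcal{W}$ satisfy $\mathsf{V}(S)\subseteq\mathcal{W}\subseteq\mathsf{V}(S,M_2)$. If $M_2\in\mathcal{W}$, then $\mathcal{W}\supseteq\mathsf{V}(S,M_2)$, so $\mathcal{W}=\mathsf{V}(S^{\infty})$ by Proposition~\ref{reg-construction}. If $M_2\notin\mathcal{W}$, then $\mathcal{W}$ satisfies some non-regular identity, and one must show that this forces $\mathcal{W}\subseteq\mathsf{V}(S)$, i.e.\ $\mathcal{W}=\mathsf{V}(S)$. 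This last step is the main obstacle, and it is exactly the content of the cited theorem.
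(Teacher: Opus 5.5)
Your proof is correct and follows the paper's own argument: the case $M_2\in\mathsf{V}(S)$ collapses the interval via Corollary~\ref{sinfty=s}, and the case $M_2\notin\mathsf{V}(S)$ gives exactly two members by the covering result \cite[Theorem 1]{ge}, just as in the paper. Your remark that all the nontrivial content sits in that cited covering theorem is also accurate.
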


\section{The flat semiring $B_0$}\label{sec:b0}
In this section, we present another potential example of a finitely based finite semiring of type \(2^{\aleph_0}\),
the subsemiring \( B_0 = \{0, e_{11}, e_{12}, e_{22}\} \) of \(B_2^1\).
Shaprynski\u{\i} \cite{shap23} showed that \( B_0 \) is finitely based. By analyzing the multiplicative semigroup of \( B_0 \), he observed that for an assignment \(\varphi : X \to B_0\), the equality \(\varphi(\bp) = e_{12}\) holds if and only if \(\bp\) has the form \(\bp = \bp_1 x \bp_2\) for some \(\bp_1, \bp_2 \in X^*\) with \(\occ(x, \bp_1) = \occ(x, \bp_2) = 0\), and
\[
\varphi(y) =
\begin{cases}
e_{11}, & y \in c(\bp_1), \\
e_{12}, & y = x, \\
e_{22}, & y \in c(\bp_2).
\end{cases}
\]
This characterization directly yields the following lemma.

\begin{lemma}\label{lem:occ}
Suppose \( \bp, \bq \in X^+ \) satisfy \( \bp \preceq_{B_0} \bq \). Then \( \occ(x, \bq) = 1 \) implies \( \occ(x, \bp) = 1 \) for all \( x \in X \).
\end{lemma}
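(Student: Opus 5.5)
We argue by contraposition, using Shaprynski\u{\i}'s characterization of when a word evaluates to $e_{12}$. Suppose $\bp \preceq_{B_0} \bq$, so that $B_0$ satisfies $\bp+\bq\approx\bq$, and suppose $\occ(x,\bq)=1$. Then $\bq=\bq_1x\bq_2$ with $x\notin c(\bq_1)\cup c(\bq_2)$. We define an assignment $\varphi\colon X\to B_0$ by
\[
\varphi(y)=\begin{cases} e_{11}, & y\in c(\bq_1),\\ e_{12}, & y=x,\\ e_{22}, & y\in c(\bq_2)\setminus c(\bq_1),\\ e_{11}, & \text{otherwise.}\end{cases}
\]
Here a variable occurring in both $\bq_1$ and $\bq_2$ is sent to $e_{11}$. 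For such a variable the characterization (which requires the sets of variables on the two sides to receive $e_{11}$ and $e_{22}$ respectively) forces $\varphi(\bq)=0$, so we must handle this case separately.

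The key step is to compute $\varphi(\bq)$ and $\varphi(\bp)$. If $c(\bq_1)\cap c(\bq_2)=\varnothing$, the characterization gives $\varphi(\bq)=e_{12}$. In $B_0$ the additive order is flat with $0$ on top, so $\varphi(\bp)+e_{12}=e_{12}$ forces $\varphi(\bp)=e_{12}$. Applying the characterization to $\bp$, we can write $\bp=\bp_1z\bp_2$ with $z$ occurring exactly once in $\bp$ and $\varphi(z)=e_{12}$. Since $x$ is the only variable with value $e_{12}$, we get $z=x$, and hence $\occ(x,\bp)=1$.

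The main obstacle is the case $c(\bq_1)\cap c(\bq_2)\ne\varnothing$, where no assignment makes $\bq$ evaluate to $e_{12}$. To handle it, we first check that if $\bq$ evaluates to $0$ under every assignment making $x\mapsto e_{12}$, the argument needs another route. We then use that $\bp\preceq\bq$ implies $c(\bp)\subseteq c(\bq)$ (or at least $x\in c(\bp)$). To see this, substitute $e_{11}$ for every variable except $x$ and $e_{12}$ for $x$; then $\bq$ evaluates to $e_{11}e_{12}=e_{12}$ times further factors. This fails if some variable occurs after $x$, so instead we substitute $e_{11}$ before and $e_{22}$ after on the \emph{positions}.

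This is impossible when variables repeat across $x$, and there the lemma may genuinely require the extra identities of $B_0$. The remedy we would try first is to restrict to the variables $x$ and those of $\bq$, and to send all variables other than $x$ to the identity-like element. Since $B_0$ has no identity, we replace it with the assignment sending every variable other than $x$ to $e_{11}$ if it lies only in $\bq_1$, to $e_{22}$ if it lies only in $\bq_2$, and, for shared variables, we note that $\varphi(\bq)=0$ under every choice. Then we argue via the order: if $\occ(x,\bp)\ne1$, find an assignment with $\varphi(\bp)=0$ but $\varphi(\bq)\ne0$. When $\occ(x,\bp)\ge2$, take $\varphi(x)=e_{12}$, so $\varphi(\bp)=0$ since $e_{12}^2=0$; when $x\notin c(\bp)$, take $\varphi(x)=0$ only in $\bq$, giving $\varphi(\bq)=0\ge\varphi(\bp)$, which is harmless. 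We would therefore rely on the first case, and in the shared-variable case we would reduce via deletion of variables (setting shared ones to $e_{11}$ or $e_{22}$ and checking that the value of $\bq$ stays nonzero as $e_{11}$ or $e_{22}$ when $x\mapsto e_{11}$ or $e_{22}$) to force $\occ(x,\bp)\le1$, with $x\in c(\bp)$ coming from the identity being regular.
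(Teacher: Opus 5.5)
Your Case 1, where $c(\bq_1)\cap c(\bq_2)=\varnothing$, is correct, and it is essentially the paper's whole proof. The paper uses the same assignment ($e_{11}$ on $c(\bq_1)$, $e_{12}$ on $x$, $e_{22}$ on $c(\bq_2)$) and gets $\varphi(\bq)=e_{12}$. Flatness of $B_0$ then forces $\varphi(\bp)=e_{12}$, and the characterization gives $\occ(x,\bp)=1$. The paper never treats the shared-variable case; there its assignment is not even well defined.

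So the obstacle you flagged is real, and your second case is a genuine gap. No cleverer assignment can close it, because the statement is false in that case. Take $\bq=yxy$ and $\bp=y$ (or $\bp=x^2$). In $B_0$, $\varphi(yxy)\neq 0$ forces $\varphi(x)=\varphi(y)\in\{e_{11},e_{22}\}$:
\begin{itemize}
\item a value $0$ kills the product;
\item $e_{12}$ at $x$ would need $\varphi(y)$ to be $e_{11}$ on the left and $e_{22}$ on the right;
\item $e_{12}$ at $y$ gives $e_{12}\,\varphi(x)\,e_{12}=0$;
\item mixing $e_{11}$ with $e_{22}$ gives $0$.
\end{itemize}
In the surviving cases $\varphi(\bq)=\varphi(y)=\varphi(\bp)$ by idempotency. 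Hence $B_0\models\bp+\bq\approx\bq$, i.e.\ $\bp\preceq_{B_0}\bq$, although $\occ(x,\bq)=1$ and $\occ(x,\bp)=0$ (resp.\ $2$).

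The same example refutes the two concrete claims in your last paragraph. Regularity of $\bp+\bq\approx\bq$ (via $M_2\cong\{0,e_{11}\}$) gives only $c(\bp)\subseteq c(\bq)$, not $x\in c(\bp)$; $\bp=y$ shows this. And no deletion or substitution argument can force $\occ(x,\bp)\le 1$; $\bp=x^2$ shows this.

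What is true, and what your Case~1 proves completely, is the lemma under an extra hypothesis: $c(\bq_1)\cap c(\bq_2)=\varnothing$ for $\bq=\bq_1x\bq_2$, which holds for instance whenever $\bq$ is linear. That restricted version is all the paper uses later, in Proposition~\ref{linb0}, where $\bq=x_1\cdots x_n$, so the downstream results are unaffected. You should state this hypothesis explicitly rather than try to prove the shared-variable case.
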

\begin{proof}
Suppose \( \occ(x, \bq) = 1 \) and write \( \bq = \bq_1 x \bq_2 \) with \( \bq_1, \bq_2 \in X^* \).
Define an assignment \(\varphi : X \to B_0\) by
\[
\varphi(y) =
\begin{cases}
e_{11}, & y \in c(\bq_1), \\
e_{12}, & y = x, \\
e_{22}, & y \in c(\bq_2),\\
0,      & \text{otherwise}.
\end{cases}
\]
Then \(\varphi(\bq) = e_{12}\). Since \(\varphi(\bp) \preceq \varphi(\bq) = e_{12}\) in the additive order of \(B_0\),
we must have \(\varphi(\bp) = e_{12}\) (as only \(e_{12}\) itself is below \(e_{12}\) in the order shown in Figure~\ref{b21}).

By the characterization mentioned above, \(\varphi(\bp) = e_{12}\) implies that \(\bp\) can be written as
\(\bp = \bp_1 x \bp_2\) for some \(\bp_1, \bp_2 \in X^*\) with \(\occ(x, \bp_1) = \occ(x, \bp_2) = 0\).
Consequently, \(\occ(x, \bp) = 1\), as required.
\end{proof}

\begin{proposition}\label{linb0}
Every linear word is an isoterm for \( B_0 \).
\end{proposition}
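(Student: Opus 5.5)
The plan is to work with the characterization of isoterms as minimal words recalled in Section~\ref{sec:prelim}. Let $\bw = x_1x_2\cdots x_n$ be a linear word, with the $x_i$ pairwise distinct. It suffices to show that every word $\bu$ with $\bu \preceq_{B_0} \bw$ equals $\bw$. All the information will come from the assignments used in the proof of Lemma~\ref{lem:occ}, combined with Shaprynski\u{\i}'s characterization of when $\varphi(\bp)=e_{12}$.

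First, since $\occ(x_i,\bw)=1$ for every $i$, Lemma~\ref{lem:occ} gives $\occ(x_i,\bu)=1$ for all $i\in[n]$. Next I rule out extra letters in $\bu$. Fix $i$ and take the assignment $\varphi$ from the proof of Lemma~\ref{lem:occ}, with $\bq=\bw$, $x=x_i$, $\bq_1=x_1\cdots x_{i-1}$ and $\bq_2=x_{i+1}\cdots x_n$. This assignment sends $x_1,\dots,x_{i-1}$ to $e_{11}$, $x_i$ to $e_{12}$, $x_{i+1},\dots,x_n$ to $e_{22}$, and every other variable to $0$. Then $\varphi(\bw)=e_{12}$, and since $e_{12}$ is minimal in the additive order, $\varphi(\bu)=e_{12}$. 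If some $y\notin c(\bw)$ occurred in $\bu$, then $\varphi(y)=0$ would force $\varphi(\bu)=0$, a contradiction. So $c(\bu)=c(\bw)$, and $\bu$ is a permutation of $x_1\cdots x_n$.

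For the order, I use the characterization once more with the same assignment for each $i$. From $\varphi(\bu)=e_{12}$ we can write $\bu=\bp_1x_i\bp_2$, where every letter of $\bp_1$ is sent to $e_{11}$ and every letter of $\bp_2$ to $e_{22}$. Hence all of $x_1,\dots,x_{i-1}$ lie to the left of $x_i$ in $\bu$, and all of $x_{i+1},\dots,x_n$ lie to its right. Letting $i$ range over $[n]$ shows that $\bu$ has the same order as $\bw$, so $\bu=\bw$. Therefore $\bw$ is minimal, and hence an isoterm, for $B_0$.

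I do not expect a real obstacle here. The only point needing care is to justify the step ``$\varphi(\bu)=e_{12}$ forces the letters of $\bp_1$ to be sent to $e_{11}$ and those of $\bp_2$ to $e_{22}$'' directly from the displayed characterization. That characterization fixes the values on $c(\bp_1)$ and $c(\bp_2)$ exactly, so it should apply verbatim.
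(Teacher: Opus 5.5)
Your proof is correct and follows essentially the paper's route: reduce to minimality, use Lemma~\ref{lem:occ} to get single occurrences, and use the $e_{11}/e_{12}/e_{22}$ assignments to force $\bu=\bw$. The differences are cosmetic. You exclude extra letters by sending them to $0$, whereas the paper invokes the subsemiring $\{0,e_{11}\}\cong M_2$, which amounts to the same thing. You also fix each position directly from the characterization of $\varphi(\bp)=e_{12}$, while the paper derives a contradiction from an inversion.
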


\begin{proof}
We only need to show that every linear word $x_1\cdots x_n$ is minimal for \( B_0 \). Suppose that \( \mathbf{p} \preceq_{B_0} x_1 \cdots x_n \) for some \( \mathbf{p} \in X^+ \). Since \( M_2 \) is isomorphic to the subsemiring $\{0,e_{11}\}$ of $B_0$, it follows that \(\mathbf{p} \preceq_{M_2} x_1 \cdots x_n \) and so \( c(\mathbf{p}) \subseteq \{x_1, \ldots, x_n\} \).

On the other hand, by Lemma~\ref{lem:occ}, \( \occ(x_i, \mathbf{p}) = 1 \) for all \( 1 \leq i \leq n \). Thus \( \mathbf{p} = x_{i_1} \cdots x_{i_n} \), where \(\{i_1, \ldots, i_n\} = \{1, \ldots, n\} \). If \( \mathbf{p} \neq x_1 \cdots x_n \) then there exists \( 1 \leq s < t \leq n \) such that \( i_s > i_t \). Define an assignment \(\varphi : X \to B_0\) such that
\[
\varphi(x) =
\begin{cases}
e_{11}, & x \in \{x_1, \ldots, x_{s-1}\}, \\
e_{12}, & x = x_s, \\
e_{22}, & x \in \{x_{s+1}, \ldots, x_n\}.
\end{cases}
\]
Then \(\varphi(x_1 \cdots x_n) = e_{12}\) but \(\varphi(\mathbf{p}) = 0\), a contradiction. Thus \( \mathbf{p} = x_1 \cdots x_n \), and so every linear word is an isoterm for \( B_0 \).
\end{proof}

By Lemma~\ref{isosw} and Proposition~\ref{linb0}, we immediately have
\begin{proposition}\label{prop:S(a1...an)inB0}
The flat semiring $S(a_1\cdots a_n)$ lies in the variety $\mathsf{V}(B_0)$ for each positive $n$.
\end{proposition}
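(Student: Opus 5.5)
This is immediate from Lemma~\ref{isosw} combined with Proposition~\ref{linb0}. The plan is to fix $n\ge 1$ and take the word $\mathbf{w}=a_1a_2\cdots a_n$, with $a_1,\dots,a_n$ pairwise distinct letters. This word is linear: each letter occurs exactly once. Up to renaming letters, it is the linear word $x_1\cdots x_n$ treated in Proposition~\ref{linb0}.

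Being an isoterm is invariant under renaming variables. Indeed, an identity $\mathbf{w}\approx\mathbf{u}$ holds in $B_0$ if and only if its image under a bijective renaming of variables holds. So Proposition~\ref{linb0} shows that $a_1\cdots a_n$ is an isoterm for $B_0$. The paper also notes earlier that isoterms are closed under taking subwords and renaming letters, so nothing beyond the linear case is needed.

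Next I apply Lemma~\ref{isosw} with $S=B_0$ and $\mathbf{w}=a_1\cdots a_n$. It says that $\mathbf{w}$ is an isoterm for $B_0$ if and only if the flat semiring $S(\mathbf{w})$ lies in $\mathsf{V}(B_0)$. Hence $S(a_1\cdots a_n)\in\mathsf{V}(B_0)$.

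No step is genuinely difficult. The one point to check is that the isoterm notion in Lemma~\ref{isosw} is the same as the one established in Proposition~\ref{linb0}. It is: the preliminaries state that minimality for $S$ is equivalent to being an isoterm, and Proposition~\ref{linb0} proves minimality of linear words.
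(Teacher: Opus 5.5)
Your proposal is correct and follows the paper's route exactly: the paper also obtains this proposition directly from Lemma~\ref{isosw} applied to the linear word $a_1\cdots a_n$, which Proposition~\ref{linb0} shows is an isoterm for $B_0$. Your remarks on renaming variables and on minimality being equivalent to the isoterm property make explicit the two small points the paper leaves implicit.
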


\begin{corollary}\label{cor:infinitechain}
The subvariety lattice $\mathcal{L}(\mathsf{V}(B_0))$ is infinite. More precisely, it contains an infinite strictly ascending chain
\[
\mathsf{V}(S(a_1)) \subsetneq  \mathsf{V}(S(a_1a_2)) \subsetneq  \mathsf{V}(S(a_1a_2a_3))\subsetneq \cdots.
\]
\end{corollary}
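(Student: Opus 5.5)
We need two things. First, every variety in the chain lies in $\mathsf{V}(B_0)$. Second, each inclusion holds and is strict.

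Membership is immediate from Proposition~\ref{prop:S(a1...an)inB0}.

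For the inclusions, note that $a_1\cdots a_n$ is a subword of $a_1\cdots a_{n+1}$. So $S(a_1\cdots a_n)$ is, up to isomorphism, the subset $\{$subwords of $a_1\cdots a_n\}\cup\{0\}$ of $S(a_1\cdots a_{n+1})$, and this subset is a subsemiring. Indeed, a product of subwords of $a_1\cdots a_n$ is either $0$ or again a subword of $a_1\cdots a_n$. Sums are flat, so they lie in the subset as well. Hence $\mathsf{V}(S(a_1\cdots a_n))\subseteq \mathsf{V}(S(a_1\cdots a_{n+1}))$.

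For strictness, I will separate the two varieties by an identity and use isoterms via Lemma~\ref{isosw}. Since $S(a_1\cdots a_{n+1})$ lies in its own variety, Lemma~\ref{isosw} shows that the linear word $x_1\cdots x_{n+1}$ is an isoterm for $S(a_1\cdots a_{n+1})$. It then suffices to exhibit a nontrivial identity of the form $x_1\cdots x_{n+1}\approx \mathbf{u}$ that holds in $S(a_1\cdots a_n)$. Then $x_1\cdots x_{n+1}$ is not an isoterm for $S(a_1\cdots a_n)$, and by Lemma~\ref{isosw} we get $S(a_1\cdots a_{n+1})\notin \mathsf{V}(S(a_1\cdots a_n))$.

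The natural candidate is the nilpotent-type identity
\[
x_1\cdots x_{n+1}\approx x_1\cdots x_{n+1}+x_1^2x_2\cdots x_{n+1}
\]
or, more simply, $x_1\cdots x_{n+1}\approx x_{n+1}x_n\cdots x_1$ when $n\ge 1$. I will verify one of these directly. In $S(a_1\cdots a_n)$ every nonzero element is a nonempty subword of $a_1\cdots a_n$. Any product of $n+1$ nonzero elements has length at least $n+1>n$, so it equals $0$, and a product involving $0$ is $0$ as well. Thus both sides of either identity always evaluate to $0$. The second identity is nontrivial as soon as $n+1\ge 2$, and the first is nontrivial for all $n$.

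The only point needing care is the base step of the chain, $\mathsf{V}(S(a_1))\subsetneq \mathsf{V}(S(a_1a_2))$. The argument above covers it, since in $S(a_1)$ all products of two elements vanish. The main, though modest, obstacle is correctly invoking Lemma~\ref{isosw} in both directions. Alternatively, one can skip isoterms entirely: $S(a_1\cdots a_{n+1})$ fails $x_1\cdots x_{n+1}\approx x_{n+1}\cdots x_1$ under $x_i\mapsto a_i$, because the left side is $a_1\cdots a_{n+1}$ while the right side is $0$.
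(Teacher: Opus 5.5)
Your proof is correct and follows essentially the same route as the paper. You get membership from the preceding proposition, the inclusions from $S(a_1\cdots a_n)$ being a subsemiring of $S(a_1\cdots a_{n+1})$, and strictness from an identity that holds in $S(a_1\cdots a_n)$ (all products of $n+1$ elements vanish there) but fails in $S(a_1\cdots a_{n+1})$ under $x_i\mapsto a_i$; the paper uses the $(n+1)$-nilpotent identity itself rather than your $x_1\cdots x_{n+1}\approx x_{n+1}\cdots x_1$, and your detour through isoterms and Lemma~\ref{isosw} is harmless but unnecessary, as your own closing remark shows.
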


\begin{proof}
For each positive integer $n$, it is easy to see that  $S(a_1\cdots a_n)$ is a subsemiring of $S(a_1\cdots a_{n+1})$.
Also, $S(a_1\cdots a_n)$ satisfies the $(k+1)$-nilpotent identity for \(k\ge n\), but $S(a_1\cdots a_{n+1})$ does not satisfy the $(n+1)$-nilpotent identity.
In conclusion, $\mathsf{V}(S(a_1\cdots a_n))$ is a proper subvariety of $\mathsf{V}(S(a_1\cdots a_{n+1}))$, so the result holds.
\end{proof}

\begin{remark}
If we could obtain a noncommutative version of Theorem~\ref{th1}, that is, replacing $S_c(a_1\cdots a_n)$ with $S(a_1\cdots a_n)$ while the conclusion still holds, then we would be able to conclude that $B_0$ is also a finitely based finite ai-semiring of type $2^{\aleph_0}$. This remains a topic for future research.
\end{remark}

\section{Conclusion}\label{sec:conclusion}

We have proved that the 3-element finitely based ai-semiring \(S_{53}\) generates a variety with continuum many subvarieties, providing the first known example of a finite, finitely based semiring of type \(2^{\aleph_0}\).
By means of ideal quotients, we also showed that \(S_{53}\) lies within the variety \(\mathsf{V}(\mathbb N)\); consequently, the max-plus semiring \(\mathbb N\) is likewise of type \(2^{\aleph_0}\).

Gao~et al.~\cite{gjrz} completely settled the finite basis problem for subvarieties of \(\mathsf{V}(S_7)\).
In particular, \(\mathsf{V}(S_c(abc))\) is the unique minimal nonfinitely based subvariety of \(\mathsf{V}(S_7)\) (see~\cite[Corollary 3.20]{gjrz}).
Although Proposition~\ref{syntax}, using syntactic methods, supplies continuum many subvarieties of \(\mathsf{V}(S_{53})\), each being nonfinitely based and possessing an infinite irredundant identity basis, we are still unable to specify concrete finite semirings in $\mathsf{V}(S_{53})$ that are nonfinitely based.
By analogy with the case of \(S_7\), the following question arises naturally:

\begin{problem}
Is \(\mathsf{V}(S_c^*(abc))\) the minimal nonfinitely based subvariety of \(\mathsf{V}(S_{53})\)?
\end{problem}

On the other hand, the finite basis problem for the semiring \(S_c(W)\), where \(W\) is a finite set of words, has been completely solved in~\cite{jrz,wzr}.
This naturally leads to the following related problem:

\begin{problem}
For which finite sets of words \(W\) is the semiring \(S_c^*(W)\) finitely based?
\end{problem}

Finally, we recall that for any semiring $S$, the variety $\mathsf{V}(S^{\infty})$ covers $\mathsf{V}(S)$ in the lattice of semiring varieties if and only if $M_2 \notin \mathsf{V}(S)$.
In particular, $S$ is finitely based whenever $S^{\infty}$ is finitely based.
However, the converse remains an open question:

\begin{problem}
Is the semiring $S^{\infty}$ finitely based whenever $S$ is finitely based?
\end{problem}

\subsection*{Acknowledgment}
The author expresses sincere gratitude to Professors Xianzhong Zhao and Miaomiao Ren for their valuable comments and suggestions on this work.

%
%
%
%
%

\end{document}